%%%%%%%%%%%%%%%%%%%%%%%%%%%%%%%%%%%%%%%%%%%%%%%%%%%%%%%%%%%%%%%%%%%%%%
%% AHO-GRP.tex
%% This is a paper about arbitray high order GRP for time-dependent problem
%%%%%%%%%%%%%%%%%%%%%%%%%%%%%%%%%%%%%%%%%%%%%%%%%%%%%%%%%%%%%%%%%%%%%

\documentclass[12pt,final]{amsart}

%%%%%%%%%%%%%%%%%%%%%%%%%%%%%%%%%%%%%%%%%%%%%%%%%%%%%%%%%%%
%\listfiles
%\input vatola.sty
%\usepackage{amsfonts}
%\usepackage{latexsym}
%\usepackage{amssymb}
%\usepackage{amsthm}
%\usepackage{amstex}
%\usepackage{euscript}
%\usepackage{amsmath}
\usepackage{graphicx}
\usepackage{psfrag}
\usepackage{subfigure}
\usepackage{color}
\usepackage{bm}

\usepackage{showkeys}

\usepackage{paralist}
\usepackage[colorlinks=true]{hyperref}

\numberwithin{equation}{section} \numberwithin{figure}{section}
\hoffset-10mm \voffset-10mm
\headsep=30pt\footskip=8mm
\textheight=22.5cm
\textwidth=16.5cm
\oddsidemargin10pt
\evensidemargin 10pt

\def\n{\noindent}
\def\pt{\partial}

\def\f#1#2{\frac {#1}{#2}}
\def\f32{\frac 32}

\def\d{\displaystyle}

\def\b#1{\overline#1}
\def\bga{\begin{array}}
\def\eda{\end{array}}

\def\De{\Delta}

\def\ev{\equiv}

\def\Gm{\Gamma}
\def\gm{\gamma}

\def\nb{\nabla}

\def\la{\lambda}
\def\La{\Lambda}

\def\al{\alpha}
\def\td{\tilde}
\def\rw{\rightarrow}
\def\iy{\infty}
\def\Om{\Omega}
\def\om{\omega}

\def\d{\displaystyle}
\def\dfr#1#2{\displaystyle{\frac{#1}{#2}}}

%%%%%%%%%%%%%%%%%%%%%%%%%%%%%%%%%%%%%%%%%%%%%%%%%%%
 \newtheorem{thm}{Theorem}[section]

 \newtheorem{prop}[thm]{Proposition}
 \theoremstyle{definition}
 
 \theoremstyle{remark}
 \newtheorem{rem}[thm]{Remark}
 
%%%%%%%%%%%%%%%%%%%%%%%%%%%%%%%%%%%%%%%%%%%%%%%

\def\bbf{\mathbf{f}}
\def\bg{\mathbf{g}}
\def\bh{\mathbf{h}}

\def\bn{\mathbf{n}}

\def\bu{\mathbf{u}}
\def\bv{\mathbf{v}}

\def\bx{\mathbf{x}}

\def\bF{\mathbf{F}}

\title{A Two-Stage  Fourth Order Time-Accurate Discretization for   Lax-Wendroff Type Flow  Solvers \\[3mm]
I.  Hyperbolic Conservation Laws }

\author{Jiequan Li and Zhifang Du}

\address{Jiequan Li: Laboratory of Computational Physics,  Institute of Applied Physics and Computational Mathematics, Beijing, P. R. China; Email: li\_jiequan@iapcm.ac.cn}

\address{Zhifang Du: School of Mathematical Sciences, Beijing Normal University, 100875, P. R. China; Email: du@mail.bnu.edu.cn}

\begin{document}
\maketitle
\markboth{Jiequan Li and Zhifang Du}{A Two-Stage  Fourth Order  L-W  Type  Scheme  }

\begin{abstract} In this paper we develop a novel two-stage  fourth order time-accurate  discretization  for time-dependent flow problems, particularly for  hyperbolic conservation laws. Different from the classical Runge-Kutta (R-K)  temporal discretization for   first order Riemann solvers as building blocks, 
the current approach   is solely associated with Lax-Wendroff (L-W)  type  schemes as the  building blocks. As a result,  a two-stage procedure can be constructed to achieve a  fourth order temporal accuracy, rather than using well-developed four stages for R-K methods. The generalized Riemann problem (GRP) solver is taken   as a representative of L-W type schemes for the construction of a two-stage fourth order scheme. 
\end{abstract} 

{\bf Key Words.}  Lax-Wendroff Method, two-stage fourth order temporal accuracy, hyperbolic conservation laws, GRP solver. 

%%%%%%%%%%%%%%%%%%%%%%%%%%%%%%%%%%%%%%%%%%%%%%%%%%%%%%%%%%%%%%%%%%%%%%%%%%%%%%%%%%%%

 \section{Introduction} 
 
The design of high order accurate  CFD methods has attracted much attention  in the past decades. Successful examples include ENO \cite{Harten-ENO, Shu-Osher,Barth}, WENO \cite{Liu, Jiang}, DG \cite{DG-2},
residual distribution (RD) method \cite{Abgrall-RD},  spectral methods \cite{Tang-Shen} etc., and references therein. Most of these methods use the Runge-Kutta (R-K) approach to achieve high order temporal accuracy   starting from the first order numerical flux functions, such as  first order Riemann solvers. In order to  achieve a  fourth order temporal accuracy,   four stages of R-K type iterations in time are usually adopted.  
\vspace{0.2cm} 

In this paper we develop a novel fourth order temporal discretization  for time-dependent problems, particularly for  hyperbolic conservation laws 
\begin{equation}
\dfr{\pt \bu}{\pt t} +\nb\cdot \bbf(\bu)=0,
\label{law}
\end{equation}
where $\bu=(u_1,\cdots, u_m)^\top$ is a conservative vector, $\bbf(\bu)=(\bbf_1(\bu),\cdots, \bbf_d(\bu))$ is the associated flux vector function, $m\geq 1$, $d\geq 1$. The approach under investigation is based on the second order Lax-Wendroff (L-W)  methodology and uses a two-stage procedure to achieve  a  fourth order accuracy,  which is different from  the classical R-K approach.  This approach can be easily extended to many other time-dependent flow problems \cite{Du-Li}. 

\vspace{0.2cm} 

The Lax-Wendroff methodology \cite{L-W}, i.e.,    the Cauchy-Kovalevskaya method in the context of PDEs,  is  fundamental in the sense that it has  second order accuracy  both in space and time,  and the underlying governing equations are fully incorporated into approximations of spatial and temporal evolution.  In a finite volume framework,   Eq.  \eqref{law} is discretized as 
\begin{equation}
\begin{array}{l}
\d \b\bu_j^{n+1} =\b\bu_j^n -\sum_{\ell}\frac{\De t}{|\Om_j|}\bF_{j\ell}(\bu(\cdot, t_n+\frac{\De t}2),\Gm_{j\ell},\bn_{j\ell})
\label{scheme}
\end{array}
\end{equation} 
where  $\b\bu_j^n$ is the solution averaged over the control volume $\Om_j$  at time $t=t_n$,  $t_{n+1}=t_n+\De t$, $\Gm_{j\ell}$ is the $\ell$-th side of $\Om_j$,  and $\bn_{j\ell}$ is the unit outward normal direction.  The numerical flux $\bF_{j\ell}$ along the time-space surface $\Gm_{j\ell}$ is based on the half time-step value $\bu(\cdot, t+\De t/2)$,  or an equivalent approximation. The L-W method achieves the time accuracy  through the formulae
\begin{equation}
\begin{array}{l}
\d \bu(\bx, t_n+\frac{\De t}2) =\bu(\bx, t_n) +\dfr{\De t}{2} \cdot \dfr{\pt \bu}{\pt t}(\bx,t_n) +\mathcal{O}(\De t^2), \ \ \ \bx\in \Gm_{j\ell}, \\
\dfr{\pt \bu}{\pt t}(\bx,t_n) =-\nb\cdot \bbf(\bu)(\bx, t_n), 
\end{array}
\end{equation}
which  adopt two instantaneous values $\bu(\bx, t_n)$ and $\frac{\pt \bu}{\pt t}(\bx, t_n)$ at any point $(\bx, t_n)$ on the boundary of a control volume. In particular, the time variation is related to the spatial derivatives of the solutions. The   first order {\em Riemann solvers}  \cite{Godunov, Toro} and second order {\em  L-W solvers}  have distinguishable  procedures to  define the two instantaneous values, respectively.   The L-W method is a one-stage spatial-temporal coupled second order accurate method, which  utilizes  the information only at time $t=t_n$.  If a R-K  approach is preferred, usually two stages are needed to achieve a second order accuracy in time.

\vspace{0.2cm}

Our goal is to extend the L-W type schemes to even higher order accuracy.     Based on  a second order L-W type solver with the information $\bu$ and $\frac{\pt \bu}{\pt t}$,  a two-stage procedure can be designed  to obtain a  fourth order temporal accurate approximation for $\bu(\cdot, t_{n+1})$: one stage at $t=t_n$ and the other stage at $t_n+\frac{\De t}2$. The algorithm is stated below.   \vspace{0.2cm} 

\begin{enumerate}
{\em 
\item[(i)] {\bf Lax-Wendroff step}. Given an initial data $\bu^n(x)$ to \eqref{law} at $t=t_n$, construct instantaneous values $\bu(\bx,t_n+0)$ and $\frac{\pt\bu}{\pt t}(\bx,t_n+0)$, which are symbolically denoted as 
\begin{equation}
\bu(\cdot, t_n+0) = \mathcal{M}(\bu^n),\ \ \ \ \frac{\pt}{\pt t}\bu(\cdot,t_n+0) = \mathcal{L}(\bu^n). 
\end{equation}
Then $\frac{\pt}{\pt t}\mathcal{L}(\bu)(\cdot,t_n+0)$ is subsequently obtained using the chain rule, 
\begin{equation}
\dfr{\pt}{\pt  t} \mathcal{L}(\bu^n)=\dfr{\pt}{\pt\bu}\mathcal{L}(\bu^n) \dfr{\pt}{\pt t}\bu(\cdot,t_n+0).
\end{equation}
\vspace{0.2cm}

\item[(ii)]  {\bf Solution advancing step}. Define the intermediate  data $\bu^*(\bx)$
\begin{equation}
\bu^* =\bu^n +\dfr 12 \De t\mathcal{L}(\bu^n) +\dfr 18\De t^2 \dfr{\pt }{\pt t}\mathcal{L}(\bu^n),
\end{equation} 
which can be  used to reconstruct new initial data $\bu^*(\bx)$ and get the solution $\frac{\pt }{\pt t}\mathcal{L}(\bu^*)$.
 Then  the solution to the next time level $t_{n+1} =t_n+\De t$ can be updated by
\begin{equation}
\begin{array}{l}
\bu^{n+1} =\bu^n + \De t \mathcal{L}(\bu^n) + \dfr 16 \De t^2 \left(\dfr{\pt }{\pt t}\mathcal{L}(\bu^n)+2\dfr{\pt }{\pt t}\mathcal{L}(\bu^*)\right). 
\end{array} 
\end{equation}
} 
\end{enumerate} 
\vspace{0.2cm} 
 
The above updating scheme distinguishes    from the traditional R-K approach in the following aspects.
\vspace{0.2cm}

(i) The above approach is based on a second order L-W type solver to achieve a fourth order temporal accuracy, which is different from  traditional R-K methods for \eqref{law}  based on first order solvers. It is a two-stage approach, while the R-K approach usually needs four stages to attain the same accuracy. With the data reconstruction in  the middle  stage,  this new approach removes  two stages of data reconstruction  from the standard R-K approach.  Together with numerical flux evaluation,  at least about $20\%$ computational cost can be saved for 1-D problems,  and $30\%$ cost saved for 2-D problems in the current method. 

\vspace{0.2cm} 

(ii) The governing equation \eqref{law} is explicitly used in the L-W solver so that all useful information can be included in the solution approximation.  More importantly, we stick to the utilization of the time derivatives of solutions $\pt \bu/\pt t$ to advance the solution, which seems more effective to  capture discontinuities sharply.   See Remark \ref{rem-grp} in Section \ref{sec-law} for the explanation. \vspace{0.2cm}

(iii)   This approach can be applied in other frameworks,  such as  DG or finite difference  methods etc.  Not only for hyperbolic conservation laws \eqref{law}, other time-dependent problems can be solved by the above approach as well once there is a corresponding   Cauchy-Kovalevskaya theorem.   

\vspace{0.2cm} 

Since this method is based on L-W type solvers, the generalized Riemann problem (GRP) solver \cite{Ben-Artzi-84, Ben-Artzi-01, Li-1, Li-2} is used  as the building block. The GRP solver is an  extension of the first order Godunov solver to the second order time accuracy from the  MUSCL-type initial data \cite{Leer}.  Its simplified acoustic version reduces to the so-called ADER solver \cite{Toro-ADER}, which can  of course be used as the building block too.  Other alternative choice could be the gas kinetic solvers (GKS) \cite{Xu-1, Xu-2}. Numerical experiments from  the GRP solver demonstrate the suitability to design such a fourth order method.  
\vspace{0.2cm}

This paper is organized in the following. After the introduction section, a two-stage temporal discretization is presented in Section 2.  In Section 3,  this approach is applied to hyperbolic conservation laws in 1-D and 2-D, respectively. 
 In Section 4,  numerical experiments for   scalar conservation laws and the compressible Euler equations are taken to validate the performance of the proposed approach.  The last section presents  discussions and some prospectives of this approach.

\vspace{0.2cm}

 \section{A  high order temporal discretization for time-dependent problems} 

In order  to advance the solution of \eqref{law} with a  fourth order temporal accuracy  for the  L-W type solvers,  consider the following time-dependent  equations,
\begin{equation}
\dfr{\pt \bu}{\pt t}=\mathcal{L}(\bu),
\label{ODE} 
\end{equation} 
subject to the initial data at $t=t_n$, 
\begin{equation}
\bu(t)|_{t=t_n}=\bu^n, 
\end{equation}
where $\mathcal{L}$ is an operator for  spatial derivatives. 
It is evident that the initial time variation of the solution at $t=t_n$ can be obtained using the chain rule and the Cauchy-Kovalevskaya method,
\begin{equation}
\dfr{\pt}{\pt t}\bu(t_n) =\mathcal{L}(\bu^n), \ \ \ \ \dfr{\pt^2}{\pt t^2}\bu(t_n) =\dfr{\pt}{\pt t}\mathcal{L}(\bu^n) = \dfr{\pt}{\pt \bu}\mathcal{L}(\bu^n)\mathcal{L}(\bu^n).
\end{equation} 
Let's consider a high order accurate approximation to $\bu^{n+1}:= \bu(t_n+\De t)$.  We write \eqref{ODE} as
\begin{equation}
\bu^{n+1} =\bu^n +\int_{t_n}^{t_n+\De t}  \mathcal{L}(\bu(t))dt. 
\label{ODE-A}
\end{equation}
Introduce an intermediate value at time $t=t_n+ A\De t$  with a parameter $A$, within a third order accuracy, 
\begin{equation}
\bu^* =  \bu^n +A\De t \mathcal{L}(\bu^n) +\dfr 12 A^2 \De t^2 \dfr{\pt}{\pt t}\mathcal{L}(\bu^n),
\label{appr-1}
\end{equation} 
which  subsequently determines the solution at the middle stage,
\begin{equation}
\dfr{\pt \bu^*}{\pt t}  =\mathcal{L}(\bu^*), \ \ \ \ \dfr{\pt}{\pt t}\mathcal{L}(\bu^*) =\dfr{\pt}{\pt \bu}\mathcal{L}(\bu^*) \mathcal{L}(\bu^*). 
\label{appr-2}
\end{equation}
Set 
\begin{equation}
\bu^{n+1}=\bu^n + \De t(B_0 \mathcal{L}(\bu^n) + B_1 \mathcal{L}(\bu^*)) +\frac 12 \De t^2 \left(C_0 \frac{\pt}{\pt t} \mathcal{L}(\bu^n) + C_1 \frac{\pt}{\pt t}\mathcal{L}(\bu^*)\right),
\label{appr-3} 
\end{equation}
where $B_0$, $B_1$, $C_0$ and $C_1$, together with $A$,  are  determined according to accuracy requirement. We formulate this approximation in the form of a proposition. 

\vspace{0.2cm} 
\begin{prop}  If  the following parameters are taken,
\begin{equation}
A=\frac 12, \ \ \  B_0= 1, \ \ \ B_1=0, \ \ \ C_0 =\frac 13, \ \ \ \ C_1=\frac 23,
\label{coeff-ode} 
\end{equation} 
the iterations \eqref{appr-1}--\eqref{appr-3} provide a fourth order accurate approximation to the solution $\bu(t)$ at $t=t_n+\De t$. These parameters are uniquely determined for the fourth order accuracy requirement. 
\end{prop}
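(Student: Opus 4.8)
The plan is to verify fourth order accuracy by a direct Taylor expansion in $\De t$, matching the numerical update \eqref{appr-3} against the exact solution term by term, and then to read off the unique coefficients from the resulting order conditions. First I would record the exact local solution as its Taylor polynomial
\[
\bu(t_n+\De t)=\sum_{k=0}^{4}\frac{\De t^k}{k!}\,\bu^{(k)}+\mathcal{O}(\De t^5),\qquad \bu^{(k)}:=\frac{\pt^k}{\pt t^k}\bu(t_n),
\]
and express each $\bu^{(k)}$ through the operator $\mathcal{L}$ by the Cauchy--Kovalevskaya/chain-rule recursion already used in the excerpt: $\bu^{(1)}=\mathcal{L}$, $\bu^{(2)}=\mathcal{L}_\bu\mathcal{L}$, $\bu^{(3)}=\mathcal{L}_{\bu\bu}(\mathcal{L},\mathcal{L})+\mathcal{L}_\bu\mathcal{L}_\bu\mathcal{L}$, and an analogous four-term expression for $\bu^{(4)}$ (all evaluated at $\bu^n$). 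The individual summands here are the elementary differentials; the point I will exploit later is that, for a generic operator $\mathcal{L}$, they are linearly independent, so two expansions agree to a given order only if the coefficients of each differential agree separately.

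Next I would expand the two building blocks that enter \eqref{appr-3}. Since \eqref{appr-1} is exactly the third order Taylor truncation of $\bu(t_n+A\De t)$ about $\bu^n$, a short computation gives $\bu^*-\bu(t_n+A\De t)=-\tfrac16 A^3\De t^3\,\bu^{(3)}+\mathcal{O}(\De t^4)$. Expanding $\mathcal{L}(\bu^*)$ and $\pt_t\mathcal{L}(\bu^*)=\mathcal{L}_\bu(\bu^*)\mathcal{L}(\bu^*)$ about $\bu^n$ and using the recursion above, I expect to obtain
\[
\mathcal{L}(\bu^*)=\bu^{(1)}+A\De t\,\bu^{(2)}+\tfrac12 A^2\De t^2\bu^{(3)}+\tfrac16 A^3\De t^3\big(\bu^{(4)}-\mathcal{L}_\bu\bu^{(3)}\big)+\mathcal{O}(\De t^4),
\]
\[
\pt_t\mathcal{L}(\bu^*)=\bu^{(2)}+A\De t\,\bu^{(3)}+\tfrac12 A^2\De t^2\bu^{(4)}+\mathcal{O}(\De t^3),
\]
the second only being needed to $\mathcal{O}(\De t^2)$ because it carries a prefactor $\De t^2$ in \eqref{appr-3}. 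The first three terms of $\mathcal{L}(\bu^*)$ simply reproduce the Taylor expansion of $\bu^{(1)}(t_n+A\De t)$; the decisive feature is the spurious $\mathcal{O}(\De t^3)$ defect $-\tfrac16 A^3\De t^3\mathcal{L}_\bu\bu^{(3)}$, which is inherited from the third order truncation in $\bu^*$.

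Finally I would substitute these expansions into \eqref{appr-3}, collect powers of $\De t$ up to $\De t^4$, and equate with the exact series. The orders $\De t^1,\De t^2,\De t^3$ yield $B_0+B_1=1$, $B_1A+\tfrac12(C_0+C_1)=\tfrac12$, and $B_1A^2+C_1A=\tfrac13$. The order $\De t^4$ condition is where the argument turns: the scheme contributes $\tfrac16 B_1A^3\bu^{(4)}+\tfrac14 C_1A^2\bu^{(4)}-\tfrac16 B_1A^3\mathcal{L}_\bu\bu^{(3)}$, which must equal $\tfrac1{24}\bu^{(4)}$. Writing $\bu^{(4)}$ and $\mathcal{L}_\bu\bu^{(3)}$ in the basis of order-four elementary differentials and matching each coefficient separately --- this independence step is the main obstacle, since it is precisely what rules out a spurious one-parameter family --- forces $C_1A^2=\tfrac16$ from the differentials $\mathcal{L}_\bu\mathcal{L}_{\bu\bu}(\mathcal{L},\mathcal{L})$ and $\mathcal{L}_\bu^3\mathcal{L}$, and $B_1A^3=0$ from $\mathcal{L}_{\bu\bu\bu}(\mathcal{L},\mathcal{L},\mathcal{L})$. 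Since $C_1A^2=\tfrac16\neq0$ already forces $A\neq0$, the relation $B_1A^3=0$ gives $B_1=0$; the remaining equations then collapse to $B_0=1$, $C_0+C_1=1$, $C_1A=\tfrac13$, $C_1A^2=\tfrac16$, whose unique solution is exactly \eqref{coeff-ode}. The uniqueness claim follows because every deduction after imposing linear independence of the differentials is forced.
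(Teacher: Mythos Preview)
Your proposal is correct and follows essentially the same route as the paper: both expand $\mathcal{L}(\bu^*)$ and $\pt_t\mathcal{L}(\bu^*)$ about $\bu^n$ via Taylor's formula, compare against the exact expansion of $\bu(t_n+\De t)$, and match coefficients of the independent elementary differentials to obtain the order conditions (the paper records them as six relations, equivalent after simplification to your five). Your organization is slightly slicker---recognizing $\bu^*$ as the third-order Taylor truncation of $\bu(t_n+A\De t)$ and isolating the single defect term $-\tfrac16 A^3\De t^3\mathcal{L}_\bu\bu^{(3)}$---and you make the linear-independence step explicit where the paper leaves it implicit, but the underlying argument is the same.
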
 

\begin{proof} The proof uses the standard Taylor series expansion, as usually done for the R-K approach.   For notational simplicity, we denote 
\begin{equation}
\mathcal{G}(\bu) := \mathcal{L}_\bu(\bu)\mathcal{L}(\bu), \ \ \ \ \  \ \ L_\bu(\bu):=\dfr{\pt}{\pt \bu}\mathcal{L}(\bu),
\end{equation} 
and similarly for $\mathcal{L}_{\bu\bu}$, $\mathcal{L}_{\bu\bu\bu}$, $\mathcal{G}_\bu$, and $\mathcal{G}_{\bu\bu}$. 
Then we have the following expansions around $\bu^n$,
\begin{equation}
\mathcal{L}(\bu^*) =\mathcal{L}(\bu^n) + \mathcal{L}_\bu (\bu^*-\bu^n) +\frac{\mathcal{L}_{\bu\bu}}2 (\bu^*-\bu^n)^2 + \frac{\mathcal{L}_{\bu\bu\bu}}6 (\bu^*-\bu^n) ^3+\mathcal{O}(\bu^*-\bu^n)^4, 
\end{equation}
and 
\begin{equation}
\mathcal{G}(\bu^*) =\mathcal{G}(\bu^n) + \mathcal{G}_\bu (\bu^*-\bu^n) +\frac{\mathcal{G}_{\bu\bu}}2 (\bu^*-\bu^n)^2  +\mathcal{O}(\bu^*-\bu^n)^3.
\end{equation}
Using \eqref{appr-1} and \eqref{appr-2}, as well as substituting the above two expansions into \eqref{appr-3}, we obtain
\begin{equation} 
\begin{array}{rl}
& \De t(B_0 \mathcal{L}(\bu^n) + B_1 \mathcal{L}(\bu^*)) +\frac 12 \De t^2 \left(C_0 \frac{\pt}{\pt t} \mathcal{L}(\bu^n) + C_1 \frac{\pt}{\pt t}\mathcal{L}(\bu^*)\right)\\[3mm]
=& \De t(B_0+B_1)\mathcal{L}(\bu^n)+ \dfr{\De t^2}{2} \left[AB_1+\frac 12(C_0+C_1)\right]\mathcal{L}_\bu(\bu^n)\mathcal{L}(\bu^n)\\[3mm] 
&\d  +\frac{\De t^3}6\left[3(A^2B_1 +AC_1)\right]\cdot [\mathcal{L}_{\bu}^2(\bu^n)\mathcal{L}(\bu^n) +\mathcal{L}_{\bu\bu}(\bu^n)\mathcal{L}^2(\bu^n)]\\[3mm]
&\d +\frac{\De t^4}{24} \left[6A^2 C_1 \mathcal{L}_\bu^3(\bu^n)\mathcal{L}(\bu^n) + (12A^3 B_1+24A^2 C_1)(\mathcal{L}_{\bu\bu}(\bu^n)\mathcal{L}_\bu(\bu^n)\mathcal{L}^2(\bu^n)\right. \\[3mm]
&\d +\left.(4A^3 B_1+ 6A^2 C_1)\mathcal{L}_{\bu\bu\bu}(\bu^n)\mathcal{L}(\bu^n)\right] +\mathcal{O}(\De t^5). 
\end{array}
\label{t-1} 
\end{equation}
Taking the Taylor series expansion directly for the time integration in \eqref{ODE-A} yields 
\begin{equation}
\begin{array}{rl}
&\d \int_{t_n}^{t_n+\De t}  \mathcal{L}(\bu(t))dt\\[3mm] &= \d \De t\mathcal{L}(\bu^n) +\dfr{\De t^2}2 \mathcal{L}_\bu(\bu^n)\mathcal{L}(\bu^n)+\dfr{\De t^3}6[\mathcal{L}_\bu^2(\bu^n) \mathcal{L}(\bu^n) +\mathcal{L}_{\bu\bu}(\bu^n) \mathcal{L}^2(\bu^n)] \\[3mm] 
&+ \dfr{\De t^4}{24} [\mathcal{L}_\bu^3(\bu^n)\mathcal{L}(\bu^n) +4 \mathcal{L}_{\bu\bu}(\bu^n)\mathcal{L}_\bu(\bu^n)\mathcal{L}^2(\bu^n) +\mathcal{L}_{\bu\bu\bu}(\bu^n)\mathcal{L}(\bu^n)]+\mathcal{O}(\De t^5).
\end{array}
\label{t-2}
\end{equation} 
The  comparison of  \eqref{t-1} and \eqref{t-2} gives
\begin{equation}
\begin{array}{c}
B_0+B_1 =1, \ \ \  AB_1 +\frac 12(C_0+C_1) =\frac 12, \ \ \  \ A^2 B_1 +AC_1 =\frac 13, \\[2mm] 
A^2C_1=\frac 16, \ \ \  A^3 B_1 +2A^2C_1 = \frac 13, \  \ \ 
2A^3 B_1 +3A^2 C_1 =\frac 12. 
\end{array}
\end{equation}
The above equations uniquely determine   $A$, $B_0$, $B_1$, $C_0$ and $C_1$ with the values in \eqref{coeff-ode}. 
\end{proof} 

Thus we present the algorithm for \eqref{ODE}  explicitly.

\vspace{0.2cm} 
\n{\bf Algorithm-general.}
\begin{enumerate}
\item[\bf Step 1.] Define intermediate values 
\begin{equation}
\begin{array}{l}
\bu^* =  \bu^n +\frac 12\De t \mathcal{L}(\bu^n) +\dfr 18 \De t^2 \dfr{\pt}{\pt t}\mathcal{L}(\bu^n),\\
\dfr{\pt}{\pt t}\mathcal{L}(\bu^*) =\dfr{\pt}{\pt \bu}\mathcal{L}(\bu^*) \mathcal{L}(\bu^*).
\end{array}
\label{a-1}
\end{equation}

\item[\bf Step 2.]  Advance the solution using the formula
\begin{equation}
\bu^{n+1} =\bu^n + \De t \mathcal{L}(\bu^n) + \dfr 16 \De t^2 \left(\dfr{\pt }{\pt t}\mathcal{L}(\bu^n)+2\dfr{\pt }{\pt t}\mathcal{L}(\bu^*)\right). 
\end{equation}
\label{a-2}
\end{enumerate}

 \vspace{0.2cm}
 
\begin{rem}  Note that $A=\frac 12$. The time $t_n+A\De t$ is in the middle  of  interval $[t_n, t_n+\De t]$ and $\bu^*$ is the mid-point value of $\bu$.  Therefore, the iterations \eqref{appr-1}--\eqref{appr-3} can be actually regarded as  an Hermite-type approximation to \eqref{ODE-A}. In contrast, the classical R-K iteration method  is written as 
\begin{equation}
\begin{array}{l} 
\d \bu^{(i)} = \sum_{k=0}^{i-1}  \al_{ki}  \bu^{(k)} + \De t\beta_{ki}\mathcal{L}(\bu^{(k)}), \ \ \ \ i = 1, ... ,m, \\
\bu^{(0)} =\bu^n, \ \ \ \ \ \bu^{(m)} =\bu^{n+1}, 
\end{array} 
\label{R-K}
\end{equation}
where $\al_{ki}\geq 0$, $\beta_{ki}>0$ are the integration weights,  satisfying the compatibility condition $\d \sum_{k=0}^{i-1} \al_{ki} =1$. 
 Since the approximation \eqref{R-K}  does not involve the derivative of $\mathcal{L}$, it is regarded as the Simpson-type approximation to \eqref{ODE-A}. 

 \vspace{0.2cm}

\end{rem}
  
\vspace{0.2cm}
 
  \section{Fourth order accurate temporal discretization for  hyperbolic conservation laws}\label{sec-law}
  
 In this section, we will extend the approach in the last section to hyperbolic conservation laws \eqref{law}  to design a time-space fourth order accurate method. This extension is based on  L-W type solvers with   the instantaneous   solution  and its temporal derivative through the governing equation \eqref{law}. 
 We will first discuss the one-dimensional case,  and then go to the two-dimensional case. 
 \vspace{0.2cm}
 
\subsection{One-dimensional hyperbolic conservation laws}  Let us start with  one-dimensional hyperbolic conservation laws 
\begin{equation}
\bu_t +\bbf(\bu)_x=0,
\label{1d-law} 
\end{equation}
where $\bu$ is, as in \eqref{law}, a conserved variable and $\bbf(\bu)$ is the associated flux function. 
We integrate it over the cell  $(x_{j-\frac 12},x_{j+\frac 12})$ to obtain the semi-discrete form
\begin{equation}
\dfr{d}{dt} \bar \bu_j(t) =\mathcal{L}_j(\bu): =-\dfr{1}{\De x_j} [\bbf_{j+\frac 12}-\bbf_{j-\frac 12}],  
\label{semi} 
\end{equation} 
where $\bbf_{j+\frac 12}$ is the numerical flux through the cell boundary $x=x_{j+\frac 12}$ at time $t$, $\De x_j=x_{j+\frac 12}-x_{j-\frac 12}$. We construct initial data for \eqref{semi} through a  fifth order WENO or HWENO interpolation technology \cite{Jiang, Qiu},
\begin{equation}
\bu(x,t_n) =\bu^n(x). 
\label{data-weno} 
\end{equation}   
Based on this initial condition, with possible  discontinuities at the cell boundaries,  the instantaneous  solution can be obtained,
\begin{equation}
\bu_{j+\frac 12}^n : = \lim_{t\rw t_n+0} \bu(x_{j+\frac 12},t), \ \ \ \ \left(\dfr{\pt\bu}{\pt t}\right)_{j+\frac 12}^n : =  \lim_{t\rw t_n+0} \dfr{\pt}{\pt t}\bu(x_{j+\frac 12},t). 
\label{1d-grp}
\end{equation} 
There  is an analytical solution for   the generalized Riemann problem (GRP) solver \cite{Li-1,Li-2}; or approximately as  ADER solvers \cite{Toro-ADER}. Intrinsically, the temporal derivative $(\pt\bu/\pt t)_{j+\frac 12}^n$ is  replaced by the spatial derivative at time $t=t_n$ using the governing equation \eqref{1d-law},
\begin{equation}
 \left(\dfr{\pt\bu}{\pt t}\right)_{j+\frac 12}^n =-\lim_{t\rw t_n+0} \dfr{\pt}{\pt x}\bbf(\bu(x_{j+\frac 12},t)),
\end{equation} 
where the spatial derivative takes account of the wave propagation. This approach is called the L-W approach numerically or the Cauchy-Kovalevskaya approach in the context of PDE theory.  In the numerical experiments in Section 4, we use the GRP solver developed in \cite{Li-1,Li-2}  and construct the corresponding algorithm for \eqref{1d-law}. This two-stage approach for \eqref{1d-law} is proposed as follows.
\vspace{0.2cm}

\n{\bf Algorithm  1-D.} 
\begin{enumerate}

\item[\bf Step 1.]  With the initial data $\bu^n(x)$ in \eqref{data-weno} obtained by the HWENO interpolation, we compute the instantaneous values $\bu_{j+\frac 12}^n$ and $(\pt\bu/\pt t)_{j+\frac 12}^n$ analytically or approximately using a L-W type solver. 
\vspace{0.2cm} 

\item[\bf Step 2.]  Construct the intermediate values $\bu^*(x)$  at $t_*=t_n+\frac 12\De t$ using the formulae,
\begin{equation}
\begin{array}{l}
\bar\bu_j^* =\bar\bu_j^n -\dfr{\De t}{2\De x_j}[\bbf_{j+\frac 12}^*-\bbf_{j-\frac 12}^*],  \\[3mm] 
\d \bbf_{j+\frac 12}^* = \bbf(\bu(x_{j+\frac 12},t_n+\frac 14\De t)), \\[3mm]
 \d  \bu(x_{j+\frac 12},t_n+\frac 12\De t):= \bu_{j+\frac 12}^n +\frac{\De t}{2}\left(\frac{\pt\bu}{\pt t}\right)_{j+\frac 12}^n.
\end{array}
\label{1d-al}
\end{equation} 
Then we use the HWENO interpolation again to construct $\bu^*(x)$ and find the values $\bu_{j+\frac 12}^*$ and $(\pt\bu/\pt t)_{j+\frac 12}^*$ at stage $t=t_n+\frac{\De t}2$, as done in Step 1. 
\vspace{0.2cm} 

\item[\bf Step 3.]  Advance the solution to the next time level $t_n+\De t$, 
\begin{equation}
\begin{array}{l}
\bu^{n+1}_j =\bar \bu_j^n -\dfr{\De t}{\De x_j} [\bbf_{j+\frac 12}^{4th} -\bbf_{j-\frac 12}^{4th}],\\[3mm]
\d \bbf_{j+\frac 12}^{4th}=\bbf(\bu_{j+\frac 12}^n) +\dfr{\De t}{6} \left[\left.\dfr{\pt  \bbf(\bu)}{\pt t} \right|_{(x_{j+\frac 12}, t_n) } +2\left. \dfr{\pt  \bbf(\bu)}{\pt t}\right|_{ (x_{j+\frac 12}, t_*)}\right]. \\[3mm] 
 
\end{array}
\label{1d-al2}
\end{equation} 

\end{enumerate}

\vspace{0.2cm} 

This is exactly a two-stage method: One stage at $t=t_n$ and the other at $t=t_n+\frac{\De t}2$.   We only need to reconstruct  data and use the L-W solver twice, at $t=t_n$ and $t=t_n+\frac 12 \De t$, respectively.  The procedure to reconstruct the intermediate state $\bu^*(x)$ and get  the GRP solution at time $t=t_n+\frac{\De t}2$  is the same as that at time  $t=t_n$. 
 \vspace{0.2cm} 

\begin{rem} \label{rem-grp}  The utilization of the time derivative $(\pt\bu/\pt t)_{j+\frac 12}^n$  is one of central points in our algorithm. Indeed,  
the fully explicit form of  \eqref{1d-law} is, 
\begin{equation}
\bar\bu_j^{n+1} =\b\bu_j^n-\dfr{\De t}{\De x_j}\left[\dfr{1}{\De t}\int_{t_n}^{t_{n+1} }\bbf(\bu(x_{j+\frac 12},t))dt-\dfr{1}{\De t} \int_{t_n}^{t_{n+1} }\bbf(\bu(x_{j-\frac 12},t))dt\right]. 
\end{equation} 
It is crucial to approximate the flux at $x=x_{j+\frac 12}$ in the sense that
\begin{equation}
\mbox{Numerical flux at } x_{j+\frac 12} - \dfr{1}{\De t}\int_{t_n}^{t_{n+1} }\bbf(\bu(x_{j+\frac 12},t))dt =\mathcal{O}(\De t^r), \ \ r>1. 
\end{equation}
Many algorithms approximate the flux with error measured by $\De\bu$, the jump across the interface,
\begin{equation}
\mbox{Numerical flux at } x_{j+\frac 12} - \dfr{1}{\De t}\int_{t_n}^{t_{n+1} }\bbf(\bu(x_{j+\frac 12},t))dt =\mathcal{O}(\|\De \bu\|^r), 
\end{equation}
 which is not proportional to the mesh size $\De x_j$ or the time step length $\De t$ when the jump is large, e.g., strong shocks,
 \begin{equation}
 \|\De \bu\| \not\approx\mathcal{O}(\De x_j). 
 \end{equation}
 It turns out that there is a large discrepancy  when strong discontinuities  present in the solutions.  In order to overcome this difficulty, we have to solve the associated generalized Riemann problem (GRP) analytically and derive the value $(\pt\bu/\pt t)_{j+\frac 12}^n$ and subsequently $(\pt\bu/\pt t)_{j+\frac 12}^*$. 
\end{rem}

\begin{rem} Without using the data reconstruction for $\bu^*(x)$, the above procedure could be regarded as the Hermite-type approximation to the total flux in the sense 
 \begin{equation}
\dfr{1}{\De t}\int_{t_n}^{t_{n+1}}   \bbf(\bu(x_{j+\frac 12},t)) dt =\sum_{k=1}^2 \left[C_k \bbf(\bu(x_{j+\frac 12},t_n+\al_k\De t)) +\De t D_k \dfr{\pt \bbf}{\pt t} (\bu(x_{j+\frac 12},t_n+\al_k\De t))\right],
\label{qua}
\end{equation} 
where $t_n+\al_k\De t$ are the quadrature nodes, $C_k$, $D_k$ are the quadrature weights.   For linear equations,  the  formula is exact. We can further verify through numerical examples that this two-stage method indeed provides a  temporal discretization with fourth order accuracy. 

\end{rem}
\vspace{0.2cm} 

\subsection{Multidimensional hyperbolic conservation laws.} 

For multidimensional cases of \eqref{law},  we  still use the finite volume framework  to develop a two-stage temporal-spatial fourth order accurate method. For simplicity of presentation, we only consider the  two-dimensional (2-D) case with  rectangular meshes in the present paper. All other cases can be treated analogously, e.g., over unstructured meshes. 
\vspace{0.2cm}

We write the 2-D case of \eqref{law} as
\begin{equation}
\dfr{\pt \bu}{\pt t} +\dfr{\pt \bbf(\bu)}{\pt x} +\dfr{\pt \bg(\bu)}{\pt y}=0.
\label{2D-law} 
\end{equation} 
The computational domain $\Om$ is divided into rectangular meshes $K_{ij}$, $\Om=\cup_{i\in I, j\in J} K_{ij}$,  $K_{ij}=(x_{i-\frac 12},x_{j+\frac 12})\times (y_{j-\frac 12},y_{j+\frac 12})$ with $(x_i,y_j)$ as the center. Then \eqref{2D-law} reads over $K_{ij}$ 
\begin{equation}
\dfr{d\bar\bu_{i,j}(t)}{dt} =\mathcal{L}_{i,j}(\bu): =-\dfr{1}{\De x_i}[\bbf_{i+\frac 12, j} -\bbf_{i-\frac 12,j}] -\dfr{1}{\De y_j} [ \bg_{i,j+\frac 12}-\bg_{i,j-\frac 12}], 
\end{equation} 
where $\De x_i=x_{i+\frac 12}-x_{i-\frac 12}$, $\De y_j=y_{j+\frac 12}-y_{j-\frac 12}$, and as convention, 
\begin{equation}
\begin{array}{l}
\d \bar\bu_{i,j} =\dfr{1}{\De x_i\De y_j}\int_{K_{ij}} \bu(x,y,t)dxdy, \\[3mm]  
\d \bbf_{i+\frac 12,j} (t)=\dfr{1}{\De y_j}\int_{y_{j-\frac 12}}^{y_{j+\frac 12}} \bbf(\bu(x_{i+\frac 12},y,t))dy, \\[3mm] 
   \bg_{i,j+\frac 12}(t) =\dfr{1}{\De x_i} \int_{x_{i-\frac 12}}^{x_{i+\frac 12}}\bg(\bu(x,y_{j+\frac 12},t))dx.
\end{array}
\end{equation}
We use the Gauss quadrature to evaluate the above integrals to obtain numerical fluxes in order to guarantee the accuracy in space. For example, we evaluate $\bbf_{i+\frac 12,j}(t)$ for any time $t$, 
\begin{equation}
\dfr{1}{\De y_j}\int_{y_{j-\frac 12}}^{y_{j+\frac 12}} \bbf(\bu(x_{i+\frac 12},y,t))dy \approx \sum_{\ell=0}^k \om_\ell \bbf(\bu(x_{i+\frac 12},y_\ell,t)),
\end{equation}  
 where $y_\ell\in(y_{j-\frac 12},y_{j+\frac 12})$, $\ell=1,\cdots, k$, are  Gauss points and $\om_\ell$ are corresponding  weights. 
At each Gauss point $(x_{i+\frac 12},y_\ell,t_n)$,  we solve the quasi 1-D generalized Riemann problem (GRP) for \eqref{2D-law} , 
\begin{equation}
\begin{array}{l}
\bu(x,y_\ell,t_n) =\left\{
\begin{array}{ll}
\bu_{i,j}^n(x,y_\ell), \ \ \ &x<x_{i+\frac 12},\\
\bu_{i,j+1}^n(x,y_\ell), \ \ \ &x>x_{i+\frac 12}.\\
\end{array}
\right.
\end{array}
\label{2d-data}
\end{equation} 
In analogy with the 1-D case in \eqref{1d-grp}, we obtain
\begin{equation}
\bu_{i+\frac 12,j_\ell}^n: =\lim_{t\rw t_n+0} \bu(x_{i+\frac 12},y_\ell,t), \ \ \  \left(\frac{\pt \bu}{\pt t}\right)_{i+\frac 12,j_\ell}^n =\lim_{t\rw t_n+0} \frac{\pt \bu}{\pt t}(x_{i+\frac 12},y_\ell,t),
\end{equation}
and similarly for others.   The GRP solver for \eqref{2D-law} and \eqref{2d-data} is put in Appendix \eqref{app-B}.  Thus we propose the following two-stage algorithm for 2-D hyperbolic conservation laws \eqref{2D-law}. 
\vspace{0.2cm}

\n{\bf Algorithm  2-D.} 
\begin{enumerate}

\item[\bf Step 1.]  With the initial data $\bu^n(x,y)$ obtained by the HWENO interpolation, we compute the instantaneous values $\bu_{i_\ell, j+\frac 12}^n$, $\bu_{i+\frac 12, j_\ell}^n$, $(\pt\bu/\pt t)_{i_\ell,j+\frac 12}^n$and $(\pt\bu/\pt t)_{i+\frac 12,j_\ell}^n$ at every Gauss point.
\vspace{0.2cm} 

\item[\bf Step 2.]  Construct the intermediate values $\bu^*(x,y)$  at $t_*=t_n+\frac 12\De t$ using the formulae,
\begin{equation}
\begin{array}{l}
\bar\bu_{i,j}^* =\bar\bu_{i,j}^n -\dfr{\De t}{2\De x_i}[\bbf_{i+\frac 12,j}^*-\bbf_{i-\frac 12,j}^*]-\dfr{\De t}{2\De y_j}[\bg_{i,j+\frac 12}^*-\bg_{i,j-\frac 12}^*],  \\[3mm] 
\d \bbf_{i+\frac 12,j}^* = \sum_{\ell=0}^k\om_\ell \bbf(\bu(x_{i+\frac 12},y_\ell, t_n+\frac 14\De t)),  \ \ \   \bg_{i_\ell,j+\frac 12}^* = \sum_{\ell=0}^k\om_\ell \bg(\bu(x_\ell,y_{j+\frac 12}, t_n+\frac 14\De t)),\\[3mm]
 \d  \bu(x_{i+\frac 12},y_\ell, t_n+\frac 12\De t):= \bu_{i+\frac 12,j_\ell}^n +\frac{\De t}{2}\left(\frac{\pt\bu}{\pt t}\right)_{i+\frac 12,j_\ell}^n,\\[3mm]
 \d  \bu(x_\ell,y_{j+\frac 12}, t_n+\frac 12\De t):= \bu_{i_\ell,j+\frac 12}^n +\frac{\De t}{2}\left(\frac{\pt\bu}{\pt t}\right)_{i_\ell,j+\frac 12}^n.
\end{array}
\label{2d-al}
\end{equation} 
Then we use the HWENO interpolation to  reconstruct $\bu^*(x,y)$ and find the values $\bu_{i_\ell, j+\frac 12}^*$, $\bu_{i+\frac 12, j_\ell}^*$, $(\pt\bu/\pt t)_{i_\ell,j+\frac 12}^*$and $(\pt\bu/\pt t)_{i+\frac 12,j_\ell}^*$ at $t=t_n+\frac{\De t}2$ as done in Step 1. 
\vspace{0.2cm}

\item[\bf Step 3.]  Advance the solution to the next time level $t_n+\De t$, 
\begin{equation}
\begin{array}{l}
\bar \bu^{n+1}_{i,j} =\bar \bu_{i,j} ^n -\dfr{\De t}{\De x_j} [\bbf_{i+\frac 12,j}^{4th} -\bbf_{i-\frac 12,j}^{4th}]-\dfr{\De t}{\De y_j} [\bg_{i,j+\frac 12}^{4th} -\bg_{i,j-\frac 12}^{4th}],\\[3mm]
\d \bbf_{i+\frac 12,j}^{4th} =\sum_{\ell=0}^k \om_\ell\bbf_{i+\frac 12,j_\ell}^{4th},\ \ \ \bg_{i,j+\frac 12}^{4th} =\sum_{\ell=0}^k \om_\ell\bg_{i_\ell,j+\frac 12}^{4th}; \\[3mm]
\d \bbf_{i+\frac 12,j_\ell}^{4th}=\bbf(\bu_{i+\frac 12,j_\ell}^n) + \dfr{\De t}6\left[\dfr{\pt \bbf}{\pt t}(\bu_{i+\frac 12,j_\ell}^n)+ 2\dfr{\pt \bbf}{\pt t}(\bu_{i+\frac 12,j_\ell}^*)\right],\\[3mm] 
\d \bg_{i_\ell,j+\frac 12}^{4th}=\bg(\bu_{i_\ell,j+\frac 12}^n) +  \dfr{\De t}6\left[\dfr{\pt \bg}{\pt t}(\bu_{i_\ell,j+\frac 12}^n)+ 2\dfr{\pt \bg}{\pt t}(\bu_{i_\ell,j+\frac 12}^*)\right].\\[3mm] 
\end{array}
\label{2d-al2}
\end{equation} 

\end{enumerate}

\vspace{0.2cm}

\section{Numerical Examples} 

In this section we provide several examples to validate the performance of the proposed approach.  The examples include linear and nonlinear scalar conservation laws,  1-D Euler equations  and 2-D Euler equations. The order of   accuracy will be tested.  All results are obtained with CFL number $0.5$ except the large density ratio problem for which the CFL number is taken to be $0.2$.  We use GRP4-HWENO5 to denote the algorithm with  the GRP solver and the HWENO fifth order accurate spatial  reconstruction, and use RK4-WENO5 to denote the algorithm with the WENO fifth order accurate spatial reconstruction and fourth order accurate temporal R-K iteration. 

 \subsection{Scalar conservation laws} 
 
 We  use our approach to solve two examples of scalar conservation laws. 
 \vspace{0.2cm}
 
 \n {\bf Example 1. } The first example is a linear advection equation with a periodic boundary condition,
 \begin{equation}
 u_t + u_x=0, \ \ \ \ \ \ \  \ u(x,0)=\sin(\pi x). 
 \end{equation} 
 The solution is computed over the space interval $[0, 2]$ and the results are displayed in Table 1. We can see that the accuracy is achieved as expected. 

 %\begin{figure}[!htb]
 %\centering
 %\includegraphics[width=\textwidth]{scalar_smooth.eps}
 %\caption[small]{The numerical results of the problem (\ref{eq:scalar_smooth})}
 %\label{Fig-linear-1}  \end{figure} 
 
% Table I.  {\color{red} provide a table for error analysis and accuracy}
% \vspace{0.2cm} 
\begin{table*}[!htbp]
  \centering
  \begin{tabular}{|l|r|r|r|r|r|r|r|r|}
    \hline
$m$&      \multicolumn{4}{|c|}{RK4-WENO5}     &     \multicolumn{4}{|c|}{GRP4-HWENO5}     \\\cline{2-9}
   &$ L_1 $ error&order&$ L_\infty $ error&order&$ L_1 $ error&order&$ L_\infty $ error &order\\\hline
%10 & 2.44(-1)    &    &   1.93(-1)      &      & 1.27(-1)   &     &   1.02(-1)      &     \\
%20 & 1.34(-2)    &4.19&   1.01(-2)      & 4.25 & 5.60(-3)   & 4.50&   4.80(-3)      & 4.41\\
40 & 4.47(-4)    &4.91&   3.81(-4)      & 4.73 & 1.67(-4)   & 5.07&   1.60(-4)      & 4.91\\
80 & 1.40(-5)    &5.00&   1.27(-5)      & 4.91 & 5.28(-6)   & 4.99&   5.10(-6)      & 4.97\\
160& 4.37(-7)    &5.00&   3.97(-7)      & 5.00 & 1.79(-7)   & 4.88&   1.60(-7)      & 4.99\\
320& 1.37(-8)    &5.00&   1.25(-8)      & 4.99 & 7.19(-9)   & 4.64&   5.68(-9)      & 4.82\\
640& 4.30(-10)   &4.99&   3.77(-10)     & 5.05 & 3.60(-10)  & 4.32&   2.86(-10)     & 4.31\\\hline
  \end{tabular}
  \caption[small]{The comparison of $L_1$, $L_\iy$ errors and convergence order for a convection equation.  The schemes are RK4-WENO5 and GRP4-HWENO5 with $ m $ cells. The results are shown at  time $ t=10 $.}
%  \caption[small]{The comparison of $L_1$, $L_\iy$ errors and convergence order for a convection equation.  The schemes are RK4-WENO5 and GRP4-HWENO5 with $ m $ cells. The time $ t=10 $. $ L_{1}$. }
  \label{tab:scalar_smooth}
\end{table*}
 
 \vspace{0.2cm} 
 
 \n{\bf Example 2.}  The second example is taken for the Burgers equation with a periodic boundary condition \cite{DG-2}, 
\begin{equation}
u_t +\left(\frac{u^2}{2}\right)_x=0, \ \ \ u(x,0) = \frac 14 + \frac 12\sin(\pi x). 
\end{equation} 
The solution is smooth up to the time $t=2/\pi$ and develops a shock that moves to interact with a rarefaction, as shown in Figure \ref{Fig-scalar_burgers} . The errors and convergence  order are shown in Table 2. 

%{\color{red} Please provide examples before $2/\pi$, at $2/\pi$ and after $2/\pi$. } 

\begin{figure}[!htb]
\centering
 \includegraphics[width=\textwidth]{./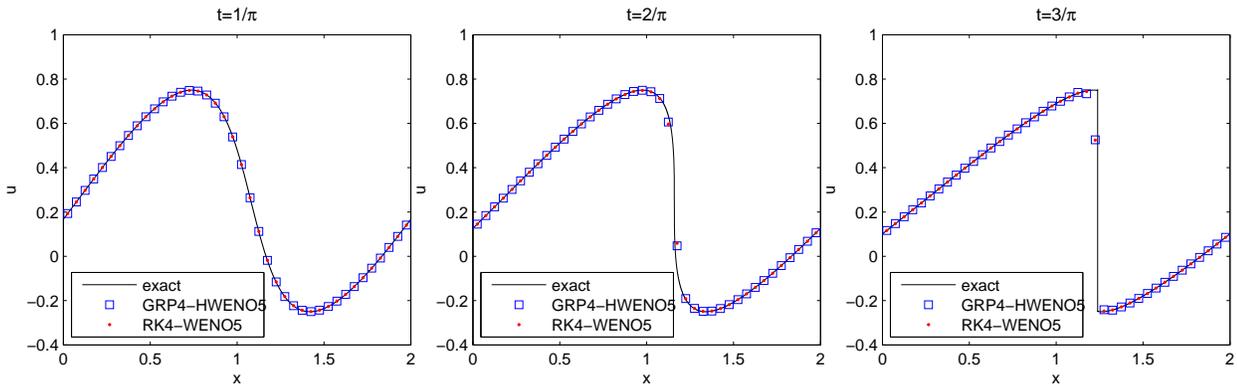}
 \caption[small]{Numerical solutions of the Burgers equation. The schemes are RK4-WENO5 and GRP4-HWENO5 which are implemented with $40$ cells. The results are shown at $ t = 1/\pi $ (left), $ t = 2/\pi $ (middle), $ t = 3/\pi $ (right), respectively. }
 \label{Fig-scalar_burgers}
 \end{figure} 
 % Table II.  {\color{red} provide a table for error analysis and accuracy}
\vspace{0.2cm}
\begin{table*}[!htbp]
  \centering
  \begin{tabular}{|l|r|r|r|r|r|r|r|r|}
    \hline
$m$&      \multicolumn{4}{|c|}{RK4-WENO5}     &     \multicolumn{4}{|c|}{GRP4-HWENO5}     \\\cline{2-9}
   &$ L_1 $ error&order&$ L_\infty $ error&order&$ L_1 $ error&order&$ L_\infty $ error &order\\\hline
%10 & 3.53(-3)    &    &   5.84(-3)      &      & 1.59(-3)   &     &   2.74(-3)      &     \\
%20 & 4.99(-4)    &2.82&   1.34(-3)      & 2.12 & 1.90(-4)   & 3.06&   5.16(-4)      & 2.41\\
40 & 3.60(-5)    &3.79&   1.74(-4)      & 2.94 & 9.07(-6)   & 4.39&   4.32(-5)      & 3.58\\
80 & 1.84(-6)    &4.29&   8.17(-6)      & 4.41 & 5.53(-7)   & 4.03&   3.01(-6)      & 3.84\\
160& 7.06(-8)    &4.71&   4.31(-7)      & 4.25 & 2.48(-8)   & 4.48&   1.83(-7)      & 4.05\\
320& 1.84(-9)    &5.26&   9.53(-9)      & 5.50 & 8.91(-10)  & 4.80&   2.97(-9)      & 5.94\\
640& 4.63(-11)   &5.31&   2.94(-10)     & 5.02 & 4.93(-11)  & 4.17&   1.96(-10)     & 3.91\\\hline
  \end{tabular}
  \vspace{0.2cm}
  
  \caption[small]{The comparison of $L_1$, $L_\iy$ errors and convergence order for the Burgers equation. The schemes are RK4-WENO5 and GRP4-HWENO5 with $ m $ cells. The results are shown at time $ t=1/\pi $.}
  \label{tab:scalar_burgers}
\end{table*}
 
 \subsection{One-dimensional Euler equations}  We provide  several examples for 1-D compressible Euler equations, 
 \begin{equation}
 \bu =(\rho, \rho v, \rho E)^\top, \ \ \ \ \ \bbf(\bu) =(\rho v, \rho v^2 +p, v(\rho E+p))^\top,
 \label{Euler}
 \end{equation} 
 where  $\rho$ is the  density, $v$ is the velocity, $p$ is the pressure and $E= v^2/2+ e$ is the total energy, $e=\frac{p}{(\gm-1)\rho}$ is the internal energy  for polytropic gases.  We test several standard examples to validate the proposed scheme. 
 \vspace{0.2cm} 
 
\n{\bf Example 3. Smooth problem.} 
In order to verify the numerical accuracy of the present fourth order accurate scheme, we check the numerical results for a  smooth problem whose initial data is
\begin{equation}\label{eq:smooth}
     \rho(x,0)   =     1+0.2\text{sin}(x), \ \ \ v(x,0)=1, \ \  p(x,0)=1.
\end{equation}
The periodic boundary conditions are used again. The results are shown in Table 3, which verifies the  expected accuracy order.  
%The present high-order scheme is termed as GRP-4-WENO. The term GRP-4-WENO-L represents the present scheme combined with the limiter. The results of the above two schemes are compared with those of the finite difference WENO scheme in \cite{WENO} termed WENO-5-RF.

\begin{table*}[!htbp]
  \centering
  \begin{tabular}{|l|r|r|r|r|r|r|r|r|}
    \hline
$m$&     \multicolumn{4}{|c|}{RK4-WENO5}      &       \multicolumn{4}{|c|}{GRP4-HWENO5}    \\\cline{2-9}
   &$ L_1 $ error&order&$ L_\infty $ error&order&$ L_1 $ error &order&$ L_\infty $ error &order\\\hline
%10 &  4.88(-2)   &      &   3.85(-2)    &      &   2.50(-2)  &     &   2.02(-2)      &      \\
%20 &  2.68(-3)   & 4.19 &   2.01(-3)    & 4.25 &   1.12(-3)  & 4.48&   9.35(-4)      & 4.43 \\
40 &  8.92(-5)   & 4.91 &   7.64(-5)    & 4.72 &   3.33(-5)  & 5.07&   3.13(-5)      & 4.90 \\
80 &  2.78(-6)   & 5.00 &   2.53(-6)    & 4.91 &   1.04(-6)  & 5.01&   9.86(-7)      & 4.97 \\
160&  8.61(-8)   & 5.01 &   7.83(-8)    & 5.02 &   3.31(-8)  & 4.97&   3.05(-8)      & 5.01 \\
320&  2.59(-9)   & 5.06 &   2.23(-9)    & 5.13 &   1.12(-9)  & 4.88&   1.01(-9)      & 4.91 \\
640&  7.07(-11)  & 5.19 &   6.15(-11)   & 5.18 &   4.37(-11) & 4.68&   4.19(-11)     & 4.60 \\\hline
  \end{tabular}
  \vspace{0.2cm} 
  
  \caption[small]{The comparison of $L_1$, $L_\iy$ errors and convergence order for the Euler equations in Example 3. The schemes are RK4-WENO5 and GRP4-HWENO5 with $ m $ cells. The results are shown at  time $ t=10 $.}
  \label{tab:Euler_smooth}
\end{table*}
%numerical accuracy is determined by the spatial reconstruction.
%From the above table we could see firstly that the present schemes, either with or without the reconstruction limiter, reaches the designed order. Actually, the reconstruction limiter draw the scheme back to the second order when the mesh is coarse, But it does not affect the computation when the mesh is fine enough. Further, the scheme WENO-5-RF, which uses the fifth-order spatial reconstruction and the forth-order Runge-Kutta temporal discretization, reaches fifth-order accuracy, which at last makes it the most accurate one among all the three schemes.\\

%It is also worth to notice that the using of the reconstruction limiter increase the computational cost remarkably. The WENO-5-RF scheme reconstruct data four times each step, all of which are the reconstructions of $ u $. The GRP-4-WENO scheme also reconstruct data four times each step two of which are the reconstructions of $ u $, and another two are the reconstructions of the spatial derivative of $ u $. The compuational cost of the latter reconstruction is less that that of the former reconstruction, which makes our present scheme a little faster than the WENO-5-RF scheme. But the using of the reconstruction limiter makes it slower than the WENO-5-RF scheme.\\

%{\color{red} Provides a figure}

 \n{\bf Example 4.  Shock-turbulence interaction problem.}   This example was proposed in \cite{Shu-Osher} to model shock-turbulence interactions. The initial data is take as
 \begin{equation}
(\rho,v,p)(x,0)=\left\{ \begin{array}{ll}
(3.857143,2.629369, 10.333333), \ \ \ \ &\mbox{for } x<-4,  \\[3mm] 
  (1 + 0.2 \sin(5x), 0, 1),&\mbox{for } x\geq -4. 
 \end{array}
 \right.
  \end{equation} 
  The result is shown in Figure  \ref{Fig-Shu-Osher} and it is comparable with those by other schemes. 

  \vspace{0.2cm} 
  
\begin{figure}[!htb]
 \centering
 \includegraphics[width=\textwidth]{./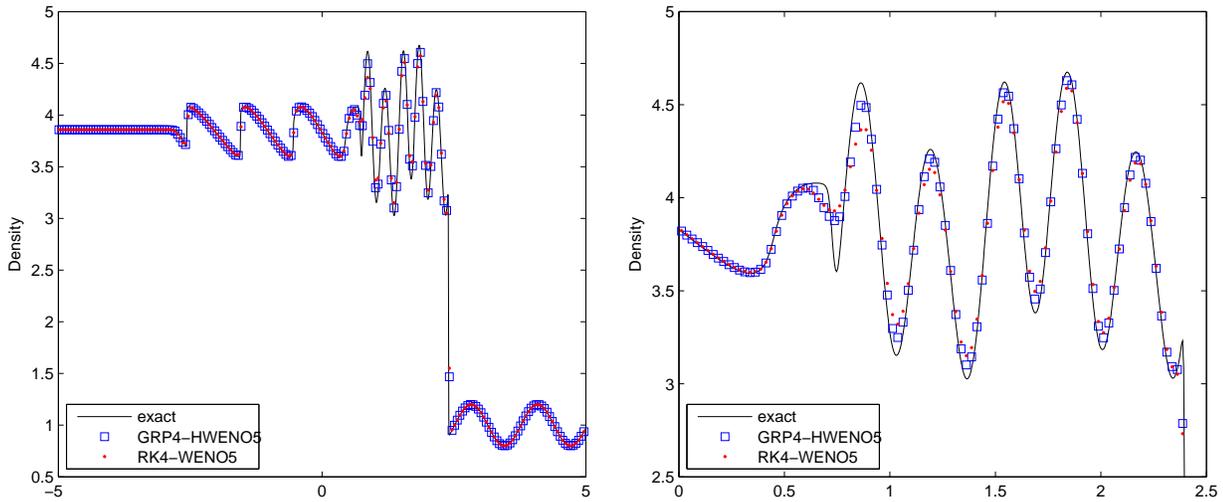}
 \label{Fig-Shu-Osher}
  \caption[small]{The comparison of the density profile for the shock-turbulence interaction problem. The schemes used are RK4-WENO5  and GRP4-HWENO5 with $400$ cells. The solid lines are the reference solution.}

% \caption[small]{The shock density wave interaction problem. t=1.8. GRP4-HWENO5 and RK4-HWENO5 with 400 cells. Density $ \rho $. Solid line: "exact" reference solution; squares and dots: computed solutions.}
\end{figure}
\vspace{0.2cm}

 \n{\bf Example 5.  Woodward-Colella problem.}  This is the Woodward-Colella interacting blast wave problem. 
The gas is at rest and ideal with $\gm=1.4$, and the density is
everywhere unit. The pressure is $p=1000$ for $0\leq x <0.1$ and
$p=100$ for $0.9<x\leq 1.0$, while it is only $p=0.01$ in
$0.1<x<0.9$. Reflecting boundary conditions are applied at both
ends. Both the GRP4-HWENO5 scheme  and the RK4-WENO5 scheme  could give a well-resolved solution using $800$ grids.  See   Figure \ref{fig:Wood}. The reference solution is computed with $4000$ grids.  

\begin{figure}[!htbp]
\centering
\includegraphics[width=\textwidth]{./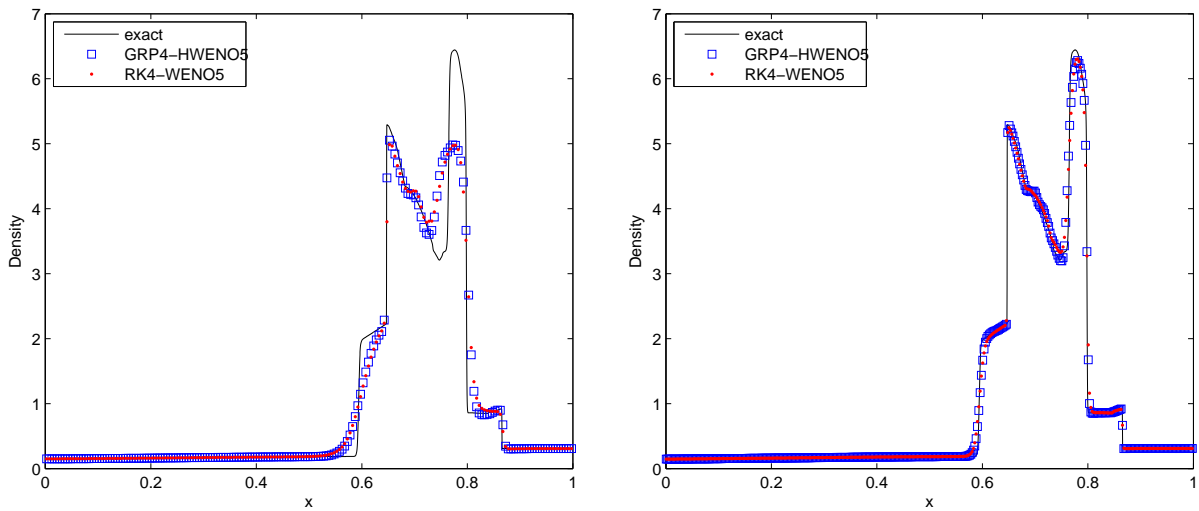}
  \caption[small]{The comparison of the density profile for the Woodward-Colella problem. The schemes are RK4-WENO5 and GRP4-HWENO5 with 200 cells (left) and 800 cells (right, 400 are shown), respectively. The solid lines are the reference solution.}
%\caption{The Woodward-Colella problem. t=0.038. GRP4-HWENO5 and RK4-WENO5 with 200 cells (left) and 800 cells (right, 400 cells are shown). Density $ \rho $. Solid line: "exact" reference solution; squares and dots: computed solutions.}
\label{fig:Wood}
\end{figure}
\vspace{0.2cm} 

 \n{\bf Example 6. Large pressure ratio problem. }  The large pressure ratio problem is first presented in \cite{Tang-Liu}  to test the ability to capture extremely strong rarefaction waves and its influence on the shock
location. In the original paper \cite{Tang-Liu}, it shows that most MUSCL--type schemes have defects in resolving, even with very fine mesh, the correct wave
structures.
In this problem, initially the pressure and density ratio between two neighboring states are very high.
The initial data is  $(\rho,v,p)=(10000,0,10000)$ for $0\leq x<0.3$ and $(\rho,v,p)=(1,0,1)$ for $0.3\leq x \leq 1.0$. 
The results with  $400$ points  are shown in Figure \ref{fig:ratio400}, by  GRP4-HWENO5 and RK4-WENO5, respectively.
With $400$ grid points, the GRP4-HWENO scheme gives  perfect results,  while the RK4-WENO5 fails to achieve  that.  

%Many high-order scheme cannot work well on this problem. As is shown in \cite{GRP-GKS}, the classical GRP scheme can give quite well resolved result on this problem using 100 grids. The following figure shows the result of the GRP-2 scheme, the GRP-4-WENO scheme {\color{red}combined with the reconstruction limiter}, and the WENO-5-RF scheme.

\begin{figure}[htp]
\centering
\includegraphics[width=\textwidth]{./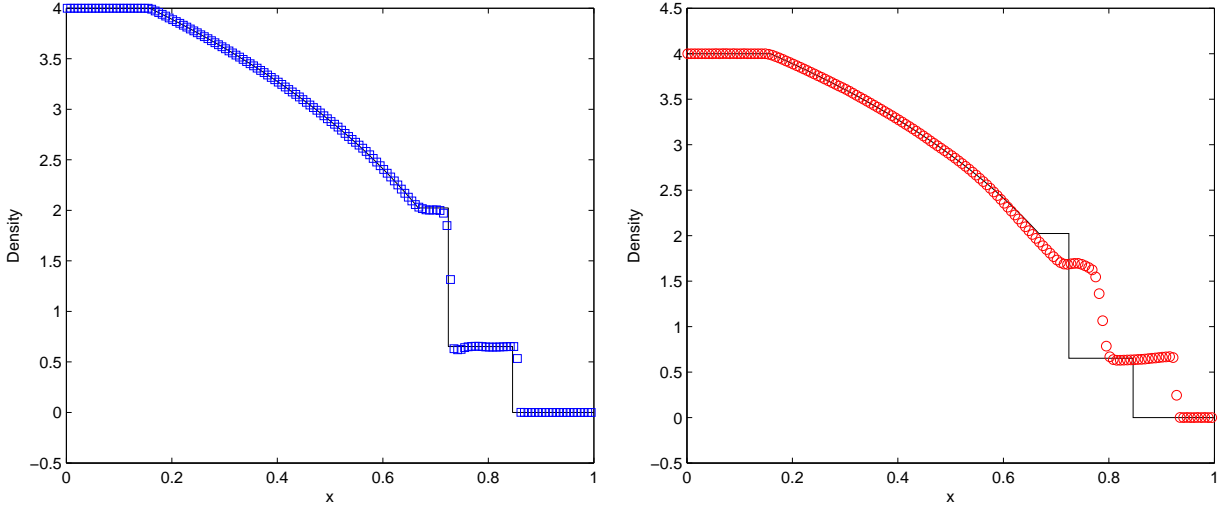}
  \caption[small]{The comparison of the density for the large pressure ratio problem. The magnitude is scaled by $10^4$. The schemes are GRP4-HWENO5 (left) and RK4-WENO5 (right) with $300$ cells. The solid lines are the reference solution.}
%\caption{The large pressure ratio problem. $ t=0.12 $. GRP4-HWENO5 (left) and RK4-WENO5 (right) with 400 cells (200 are shown). Density $ \rho $. Solid line: exact solution; squares and circles: computed solutions.}
\label{fig:ratio400}
\end{figure}
\vspace{0.2cm}

\subsection{2-D Examples.}  We provide several two-dimensional examples to validate the proposed approach.  The governing equations are the 2-D Euler equations,
\begin{equation}
\begin{array}{l}
\bu=(\rho,\rho u,\rho v,\rho E)^\top, \\[3mm] 
\bbf(\bu) =(\rho u, \rho u^2+p,\rho uv, u(\rho E+p))^\top,\\[3mm]
\bg(\bu) =(\rho v, \rho uv, \rho v^2+p, v(\rho E+p))^\top,
\end{array}
\label{2d-Euler}
\end{equation} 
where $(u,v)$ is the velocity, $E=\frac{u^2+v^2}2+e$, $e=\frac{p}{(\gm-1)\rho}$.  The first example is about the isentropic vortex problem to test the accuracy.   The other examples aim to verify the expected performance of this approach. 

\vspace{0.2cm}

\n{\bf Example 7. Isentropic vortex problem.} In this first 2-D isentropic vortex example we  want to verify the numerical accuracy of our scheme.  Initially  the mean flow is given with $ \rho = 1 $, $ p = 1 $, and $ (u,v)=(1,1) $. Then an isentropic vortex is put on this mean flow 
\begin{equation*}
\begin{array}{c}
  (\delta u, \delta v) = \frac{\epsilon}{2\pi}e^{0.5(1-r^2)}(-\bar{y}, \bar{x}), \\[10pt]
  \delta T = - \frac{(\gamma - 1) \epsilon^2}{8\gamma\pi^2}e^{1-r^2}, ~ \delta S=0,
\end{array}
\end{equation*}
where $ (\bar{x},\bar{y}) = (x-5,y-5)$, $ r^2 = \bar{x}^2+\bar{y}^2 $, and the vortex strength is often set to be $ \epsilon = 5.0 $. The computation is performed in the domain $ [0,10]\times[0,10] $, extended periodically in both directions. The accuracy is achieved with the expected  order $4$. 
\vspace{0.2cm} 

\begin{table*}[!htbp]
  \centering
  \begin{tabular}{|l|r|r|r|r|r|r|r|r|}
    \hline
$m$&     \multicolumn{4}{|c|}{RK4-WENO5}      &       \multicolumn{4}{|c|}{GRP4-HWENO5}    \\\cline{2-9}
   &$ L_1 $ error&order&$ L_\infty $ error&order&$ L_1 $ error &order&$ L_\infty $ error &order\\\hline
%10 &  1.30(-2)   &      &   2.05(-1)    &      &   9.18(-1)  &      &   1.36(-1)  &      \\
%20 &  2.50(-3)   & 2.38 &   4.45(-2)    & 2.49 &   1.12(-1)  & 3.03 &   2.52(-2)  & 2.43 \\
40 &  1.79(-4)   & 3.80 &   3.29(-3)    & 3.76 &   7.70(-3)  & 3.87 &   1.92(-3)  & 3.71 \\
80 &  6.92(-6)   & 4.69 &   1.96(-4)    & 4.07 &   3.47(-4)  & 4.47 &   1.26(-4)  & 3.93 \\
160&  2.03(-7)   & 5.09 &   4.95(-6)    & 5.31 &   1.05(-5)  & 5.05 &   3.26(-6)  & 5.28 \\
320&  7.83(-9)   & 4.70 &   1.96(-7)    & 4.66 &   3.47(-7)  & 4.92 &   1.54(-7)  & 4.40 \\
640&    -        &   -  &        -      &  -   &   1.03(-8)  & 5.08 &   5.33(-9)  & 4.86 \\
\hline
  \end{tabular}
  \vspace{0.2cm}
  
  \caption[small]{The comparison of $L_1$, $L_\iy$ errors and convergence order for the isentropic vortex problem of the Euler equations. The schemes are RK4-WENO5 and GRP4-HWENO5 with $ m \times m $ cells. The results are given at  time $ t=2 $.}
  %\caption[small]{The isentropic vortex problem. RK4-WENO5 and GRP4-HWENO5 with $ m $ cells. $ t=2 $. $ L_{1} $ and $ L_{\infty} $ errors.}
  \label{tab:Isen_vortex}
\end{table*}

\n{\bf Example 8.  Two-dimensional Riemann problems.} We provide three examples for two-dimensional Riemann problems, as shown in Figure \ref{fig:Riemann4in1_A4H5}.   These examples are taken from \cite{Han} and  involve the interactions of shocks, the interaction of shocks with vortex sheets and the interaction of vortices. Here we use $S$ represents a shock,  $J$ a vortex sheet and $R$ a rarefaction wave.  The computation is implemented over the domain $[0,1]\times [0,1]$.  The output time is specified below case by case.

\vspace{0.2cm}

\n {\bf a.  Interaction of shocks and vortex sheets } $S_{12}^{+}J_{23}^{-}J_{34}^{+}S_{41}^{-}$.
   The initial data are 
\begin{equation}
(\rho,u,v,p)(x,y,0) =\left\{
\begin{array}{ll}
(1.4,8,20,8), \ & 0.5<x<1.0,0.5<y<1.0,\\
(-4.125,4.125,-4.125,-4.125), & 0< x<0.5,0.5<y<1.0,\\
(-4.125,-4.125,-4.125,4.125), & 0<x<0.5,0<y<0.5,\\
(1,116.5,116.5,116.5), & 0.5<x<1.0,0<y<0.5.
\end{array}
\right.
\end{equation}
The output time is $0.26$. 

 \vspace{0.2cm} 
 
 \n {\bf b.  Interaction of shocks, rarefaction waves and vortex sheets}  $J_{12}^{+}S_{23}^{-}J_{34}^{-}R_{41}^{+}$.
   The initial data are 
\begin{equation}
(\rho,u,v,p)(x,y,0) =\left\{
\begin{array}{ll}
(1, 2, 1.0625, 0.5179), \ & 0.5<x<1.0,0.5<y<1.0,\\
(0, 0, 0, 0), & 0< x<0.5,0.5<y<1.0,\\
(0.3, -0.3, 0.2145, -0.4259), & 0<x<0.5,0<y<0.5,\\
(1, 1, 0.4, 0.4), & 0.5<x<1.0,0<y<0.5.
\end{array}
\right.
\end{equation}
The output time is $ t=0.055 $.

 \vspace{0.2cm} 
 
 \n { \bf c.  Interaction of rarefaction waves and vortex sheets}  $R_{12}^{+}J_{23}^{+}J_{34}^{-}R_{41}^{-}$.
   The initial data are 
\begin{equation}
(\rho,u,v,p)(x,y,0) =\left\{
\begin{array}{ll}
(1,0.5197,0.8,0.5197), \ & 0.5<x<1.0,0.5<y<1.0,\\
(0.1,-0.6259,0.1,0.1), & 0< x<0.5,0.5<y<1.0,\\
(0.1,0.1,0.1,-0.6259), & 0<x<0.5,0<y<0.5,\\
(1,0.4,0.4,0.4), & 0.5<x<1.0,0<y<0.5.
\end{array}
\right.
\end{equation}
The output time is $0.3$. 

From the results we can see that this scheme can capture very small scaled vortices resulting from the interaction of vortex sheets. The resolution of vortices is  comparable to that by the adaptive moving mesh GRP method (cf. \cite{Han}). 

 \vspace{0.2cm}

\begin{figure}[htp]
\centering
\includegraphics[width=\textwidth]{./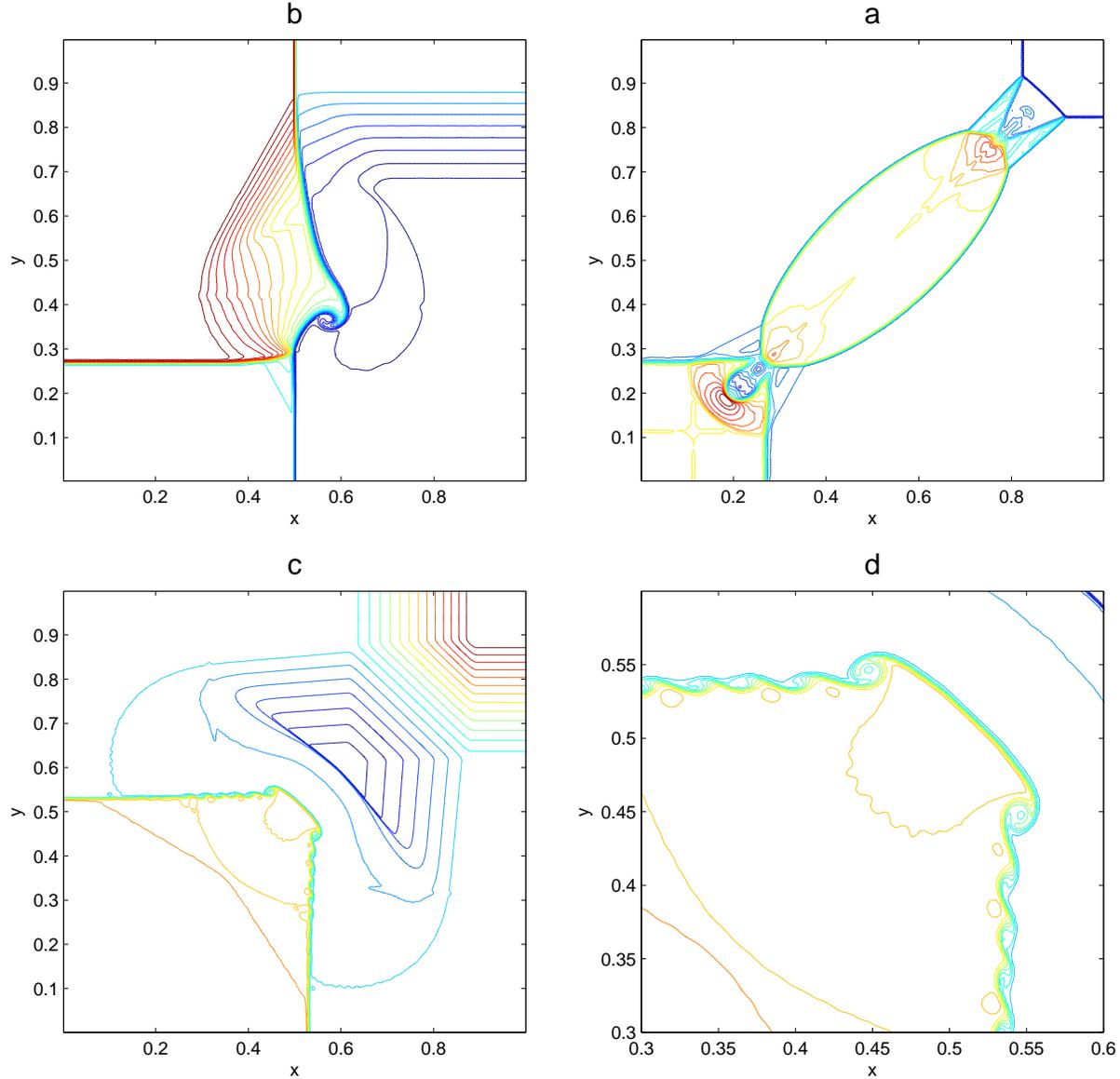}
\caption{The density contours of three 2-D Riemann problems computed with GRP4-HWENO5.  a. $ [J_{12}^{+}S_{23}^{-}J_{34}^{-}R_{41}^{+}] $ with $ 200 \times 200 $ cells. b. $ [S_{12}^{+}J_{23}^{-}J_{34}^{+}S_{41}^{-}] $ with $ 300 \times 300 $ cells. c. $ [R_{12}^{+}J_{23}^{+}J_{34}^{-}R_{41}^{-}] $ with $ 500 \times 500 $ cells. d. Local enlargement of c.}
\label{fig:Riemann4in1_A4H5}
\end{figure}
  
\n{\bf Example 9. The double mach reflection problem.} This is again a standard test problem to display the performance of high resolution schemes. The computational domain for this problem is $ [0,4] \times [0,1] $, and $ [0,3] \times [0,1]$ is shown. The reflection wall lies at the bottom of the computaional domain starting from $ x=\frac{1}{6} $. Initially a right-moving Mach 10 shock is positioned at $ x=\frac{1}{6} ,y=0$ and makes a $ \frac{\pi}{3} $ angle with the $x$-axis. The results are shown in Figures \ref{fig:DM240_A4H5} and \ref{fig:DMGRP-H480} with excellent performance.

\begin{figure}[htp]
\centering
\includegraphics[width=\textwidth]{./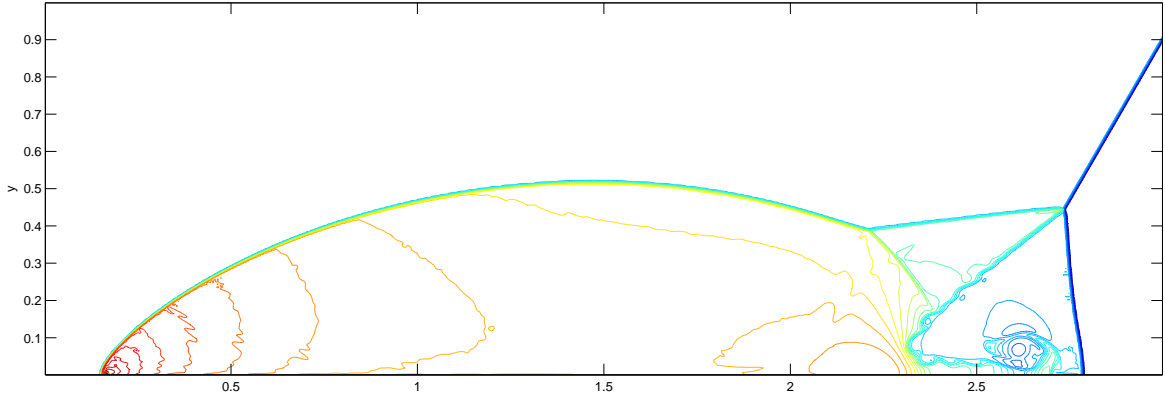}
\caption{The contours of the density for the double mach reflection problem. GRP4-HWENO5 is implemented with $960\times 240$ cells and  the result is shown at $ t = 0.2 $. 30 contours are drawn.}\label{fig:DM240_A4H5}
\end{figure}
\begin{figure}[htp]
\centering
\includegraphics[width=\textwidth]{./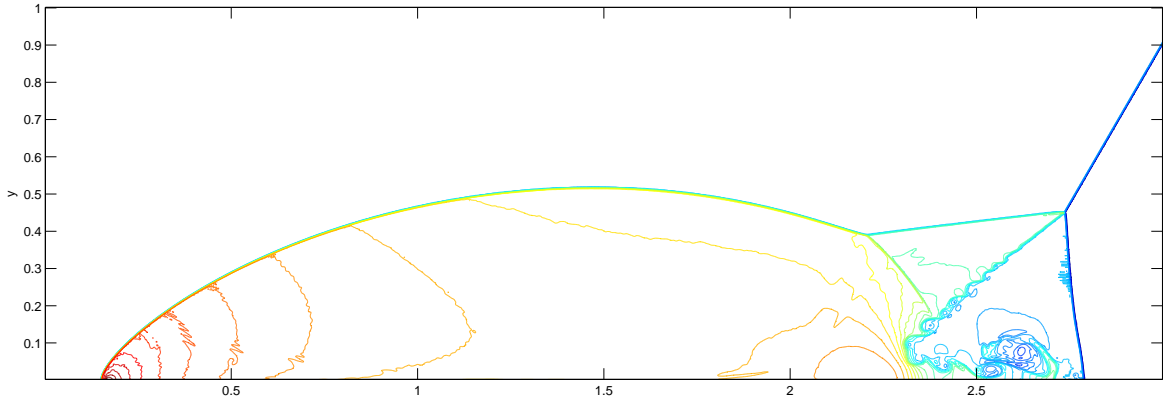}
\caption{The contours of the density of the double mach reflection problem. GRP2-HWENO5 is implemented with $1920\times 480$ cells and  the result is shown at $ t = 0.2 $. 30 contours are drawn.}\label{fig:DMGRP-H480}
\end{figure}

%\begin{figure}[htp]
%\centering
%\includegraphics[width=\textwidth]{./fig/DM_GRP-H-fixed_limiter-test_1920x480_002011.eps}
%\caption{The contours of the density of the double mach reflection problem. GRP4-HWENO5 is implemented with $1920\times 480$ cells and $ t = 0.2 $. 30 contours are drawn.}\label{fig:DM480}
%\end{figure}

\vspace{0.2cm}

\n{\bf Example 10. The shock vortex interaction problem} This example describes the interaction between a stationary shock and a vortex, the computational domain is taken to be $ [0,2] \times [0,1] $. A stationary Mach 1.1 shock is positoned at $ {x = 0.5} $ and normal to the $ x $-axis. Its left state is $ (\rho, u, v, p) = (1, \sqrt{\gamma}M, 0, 1)$, where $ M $ is the mach number of the shock. A small vortex is superposed to the flow left to the shock and centers at $ (x_c, y_c) = (0.25, 0.5)$. The vortex can be considered as a perturbation to the mean flow. The perturbations to the velocity ($u$, $v$), the temperature ($ T=\frac{p}{\rho} $) and the entropy ($ S=\frac{p}{\rho^{\gamma}} $) are:
\begin{equation*}
  \begin{array}{c}
  (\delta u, \delta v) = \frac{\epsilon}{r_c} e^{\alpha(1-\tau^2)}(\bar{y}, -\bar{x}), \\[10pt]
  \delta T = - \frac{(\gamma - 1) \epsilon^2}{4\alpha\gamma}e^{2\alpha(1-\tau^2)}, ~ \delta S=0.
  \end{array}
\end{equation*}
In our case, we set $ \epsilon=0.3$ and $ \alpha=0.204 $. The computation is performed on a $ 400 \times 100 $ uniform mesh. The results (the pressure contours) are shown in Figure \ref{fig:SVI_A4H5}.
\begin{figure}[htp]
\centering
\includegraphics[width=\textwidth]{./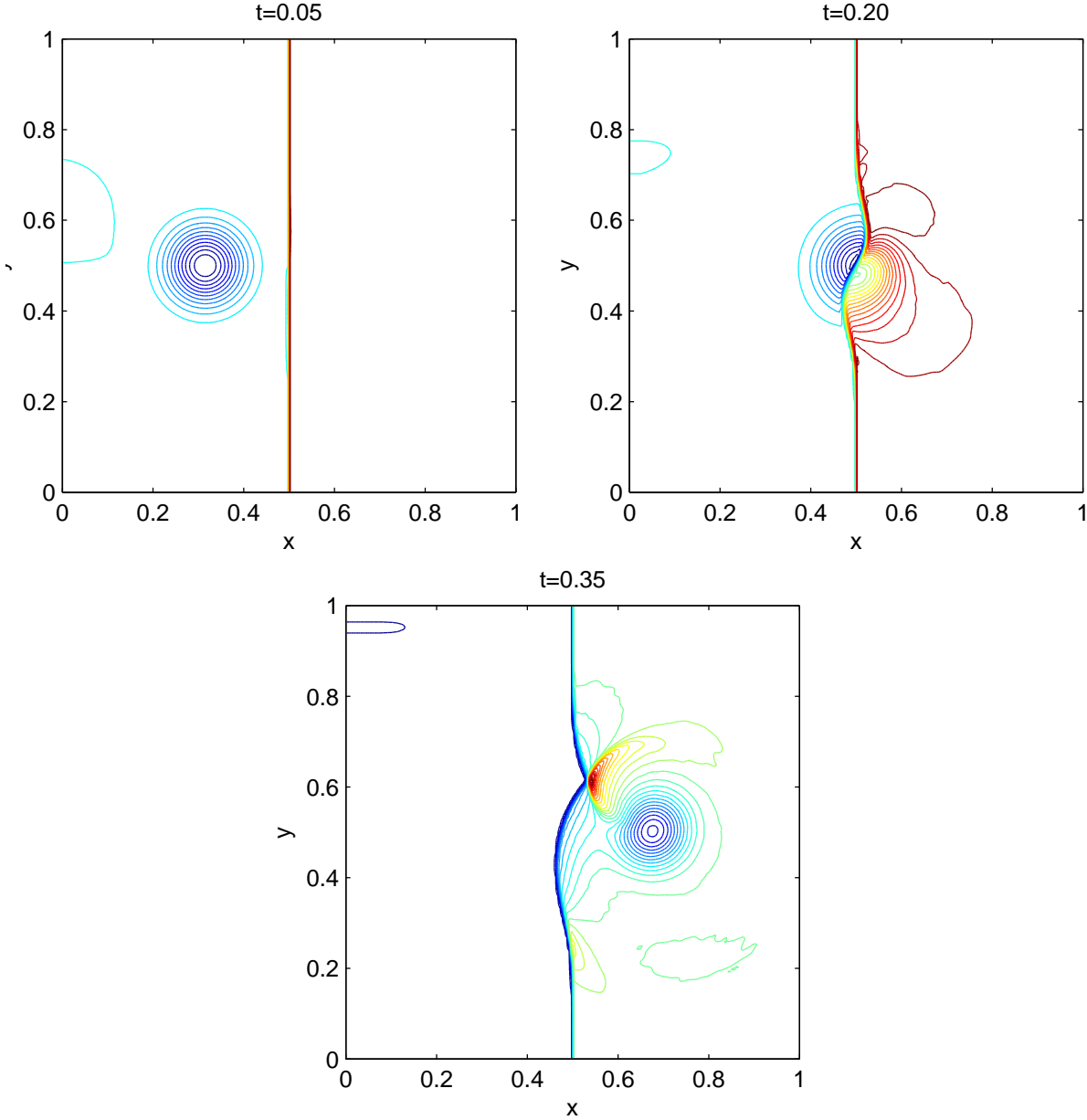}
\caption{The contours of the pressure for the shock vortex interaction problem. GRP4-HWENO5 with $400\times 200$ cells is implemented and $30$ contours are drawn.}\label{fig:SVI_A4H5}
\end{figure}

\section{Discussions and Prospectives}

This paper proposes a two-stage fourth order accurate temporal discretization for time-dependent problems based on the L-W type flow solvers.  The particular applications are given for hyperbolic conservation laws.  Based on HWENO interpolation technology \cite{Qiu},   a scheme with  a fifth order accuracy in space and a fourth order accuracy in time is developed.  A number of numerical examples are provided to validate the accuracy of the scheme and  its computational performance   for complex flow problems. 
\vspace{0.2cm}

The current temporal  discretization is different from  the classical R-K approach. As discussed in Sections 2 and 3,   the present approach is  of the Hermite type while the R-K approach is of Simpson type.   The L-W approach  with coupled space and time evolution is the basis for the development of the current high order method.  Our approach can be viewed as the extension of the L-W method from second order to even higher order accuracy,   without using successive replacement of temporal derivatives by spatial derivatives in the one-stage method.  This technique is particularly useful for nonlinear systems. 
\vspace{0.2cm}

In this paper we just apply this approach to hyperbolic conservation laws in the finite volume framework over rectangular meshes.  However,   this approach can be applied to any time-dependent problems as long as  L-W  type solvers are available over any type of computational mesh.  In the future studies, we will extend this approach to other formulations, e.g., DG formulation \cite{Li-Wang}, to other systems  e.g.,  the Navier-Stokes equations \cite{Du-Li}.  
\vspace{0.2cm}

This work is just a starting point for the design of high order accurate methods  and a lot of theoretical problems remain for the further study,   such as numerical stability.  Nevertheless, the numerical experiments clearly show that the current fourth order scheme can use a  CFL number as large as that for  the second order GRP scheme. Indeed, the CFL number can be taken even larger than $1/2$ if the waves  computed are not very strong. The large time step, in comparison with other high order schemes, does not  decrease the accuracy of the scheme. So this approach will be efficient  for the simulation of turbulence flows with multi-scale structures by taking a large time step and the coupling of the spatial and temporal numerical flow evolution.

\appendix
\ 

\section{The GRP solver}  \label{app-B}

This appendix includes the GRP solver  used in the coding process just for completeness and readers' convenience. The details can be found  in \cite{Li-1} for the Euler equations and \cite{Li-2} for general hyperbolic systems,
\begin{equation}
\bu_t+\bbf(\bu)_x=\bg(\bu,x),
\label{1d-balance}
\end{equation} 
where $\bg(\bu,x)$ is a source term. This paper  focuses on the homogeneous case, $\bg(\bu,x)\ev0$ for 1-D case. As far as 2-D case is concerned with,  the effect tangential to cell interfaces can be regarded as a source and therefore the 2-D  GRP solver can be derived using the similar idea to that for 1-D GRP solver.    
\vspace{0.2cm} 

\subsection{1-D GRP}

The 1-D GRP solver assumes that the initial data consist of two pieces of polynomials,
\begin{equation}
\bu(x,0) =\left\{
\begin{array}{ll}
\bu_-(x), \ \ \ & x<0,\\[3mm]
\bu_+(x), & x>0, 
\end{array}
\right.
\label{GRP-data}
\end{equation}
where  $\bu_\pm(x)$ are two polynomials with  limiting states 
\begin{equation}
\begin{array}{c}
\d\bu_\ell =\lim_{x\rw 0-0} \bu_-(x),  \ \ \ \ \ \bu_r=\lim_{x\rw 0+0} \bu_+(x); \\[3mm]
\d \bu_\ell'=\lim_{x\rw 0-0} \bu_-'(x),  \ \ \ \ \ \bu_r'=\lim_{x\rw 0+0} \bu'_+(x). 
\end{array}
\end{equation} 
In the present study, we use the HWENO method  in \cite{Qiu} to construct the initial data and therefore $\bu_\pm(x)$
are two pieces of polynomials of order five. 
\vspace{0.2cm} 

The GRP solver has two versions: (i) Acoustic version; (ii) Genuinely nonlinear version. 

\subsubsection{Acoustic GRP solver}  The acoustic GRP deals with weak discontinuities or smooth flows and assumes that 
\begin{equation}
\|\bu_\ell-\bu_r\|\ll 1. 
\end{equation}
However, we emphasize that the difference $\bu_\ell'-\bu_r' $ is not necessarily small.  Then we denote by 
\begin{equation}
\bu_0\approx\bu_\ell\approx \bu_r, 
\end{equation}
and linearize \eqref{1d-balance} around  $\bu_0$ as
\begin{equation}
\bu_t+ A(\bu_0) \bu_x=0,  \ \ \ \ \ A(\bu_0) := \dfr{\pt \bbf(\bu_0)}{\pt \bu}.  
\end{equation}  
Then the instantaneous time derivative of $\bu$ is computed as,
\begin{equation}
\left(\dfr{\pt \bu}{\pt t}\right)_0 := \lim_{t\rw 0+0}  \dfr{\pt \bu}{\pt t}(0,t) = -[R\La^+ R^{-1} \bu_\ell' + R\La^- R^{-1} \bu_r'],
\end{equation} 
where $\La=\mbox{diag}(\la_1, \cdots, \la_m)$, $\la_i$, $i=1,\cdots, m$ are the eigenvalues of $A(\bu_0)$, $R$ is the (left) eigenmatrix of $A(\bu_0)$, $\La^+ =\mbox{diag}(\max(\la_i,0))$, $\La^- =\mbox{diag}(\min(\la_i,0))$. \vspace{0.2cm}

The acoustic GRP is named as the $G_1$ scheme  in the series of GRP papers and it is consistent with the ADER solver by Toro \cite{Toro-ADER}. 
\vspace{0.2cm}

\subsubsection{Nonlinear GRP solver.} As  the jump at $x=0$ is large, the acoustic GRP is not sufficient to resolve the resulting strong discontinuities. Any ``rough" approximation is dangerous  since  the error is measured with jump $\|\bu_r-\bu_r\|$, which is not proportional to the mesh size in the practical computation and may lead to  large numerical discrepancy.  Therefore, we have to analytically solve the associated generalized Riemann problem \eqref{1d-balance}-\eqref{GRP-data} and derive 
 the ``genuinely" nonlinear GRP solver, which is named as the $G_\iy$ GRP.  This version is interpreted as the L-W approach plus the tracking of strong discontinuities. 
 \vspace{0.2cm} 
 
 Here we include the resolution of GRP \eqref{1d-balance}-\eqref{GRP-data} for the Euler equations \eqref{Euler}. The instantaneous value $\bu_0$ is obtained by the Riemann solver and $(\pt\bu/\pt t)_0$ is obtained by solving a pair of algebraic equations essentially, 
 \begin{equation}
 \begin{array}{ll}
 a_\ell\left(\dfr{\pt v}{\pt t}\right)_0 + b_\ell \left(\dfr{\pt p}{\pt t}\right)_0=d_\ell,\\[3mm]
 a_r\left(\dfr{\pt v}{\pt t}\right)_0 + b_r\left(\dfr{\pt p}{\pt t}\right)_0=d_r,
 \end{array}
 \end{equation} 
 where the coefficients  $a_i, b_i, d_i$, $i=1,2$, are given explicitly in terms of the initial data \eqref{GRP-data}, and their formulae can be found in \cite{Li-1}. 
 \vspace{0.2cm}

%\begin{figure}[!htb]
%\centering
%\psfrag{0}{$0$}
%\psfrag{x}{$x$}\psfrag{t}{$t$}\psfrag{betal}{\small
%$\beta=\beta_L$}\psfrag{ul}{$U_L$}\psfrag{al2}{$\bar{\bar\al}$}
%\psfrag{shock}{shock}\psfrag{rarefaction}{rarefaction}\psfrag{uminus}{$U_-(x,t)$}\psfrag{uplus}{$U_+(x,t)$}
%\psfrag{u1}{$U_1$}\psfrag{u2}{$U_2$}\psfrag{al=albar}{$\al=\bar\al$}\psfrag{xbar}{$\bar
%\al$ }\psfrag{al=2albar}{$\al=\bar{\bar \al}$}\psfrag{ur}{$U_R$}
%\psfrag{contact}{contact}\psfrag{betastar}{$\beta=\beta_*$}
%\psfrag{ul}{$U_L$}\psfrag{ur}{$U_R$}\psfrag{ustar}{$U_*$}
%\includegraphics[clip, width=9cm]{wave.eps}
%\caption[small]{Genuinely nonlinear  GRP}
%\label{Fig-N-GRP} 
%\end{figure}

 Since the variation of entropy $s$ is precisely quantified, the instantaneous time derivative of the density is then obtained using the equation of state $p=p(\rho, s)$, 
\begin{equation}
dp =c^2 d\rho +\dfr{\pt p}{\pt s} ds. 
\end{equation}
\vspace{0.2cm}
 
\subsection{Quasi-1-D GRP solver}  As the two-dimensional case are dealt with,  we need to solve a  so-called quasi-1-D GRP
\begin{equation}
\begin{array}{l}
\bu_t +\bbf(\bu)_x +\bg(\bu)_y=0, \\[3mm]
\bu(x,y,0)=\left\{
\begin{array}{ll}
\bu_\ell(x,y), \ \ \ \ & x<0, \\
\bu_r(x,y) & x>0,
\end{array}
\right.
\end{array}
\label{Q1-GRP} 
\end{equation}
where $\bu_\ell(x,y)$ and $\bu_r(x,y)$ are two polynomials defined on the two neighboring computational cells, respectively.  Since we just want to construct the  flux  normal to cell interfaces, the tangential effect can be regarded as 
a source. Therefore, we rewrite \eqref{Q1-GRP} as
\begin{equation}
\begin{array}{l}
\bu_t +\bbf(\bu)_x=- \bg(\bu)_y, \\[3mm]
\bu(x,\td y,0)=\left\{
\begin{array}{ll}
\bu_\ell(x,\td y), \ \ \ \ & x<0, \\
\bu_r(x,\td y) & x>0,
\end{array}
\right.
\end{array}
\label{Q2-GRP} 
\end{equation}
by fixing a $y$-coordinate.  That is, we solve 1-D GRP at a point $(0,\td y)$ on the interface, by considering the effect tangential to the interface $x=0$.  The value $\bg(\bu)_y$ at $(0,\td y)$  takes account of the local wave propagation. 
\vspace{0.2cm}

Again, the quasi 1-D GRP solver for solving \eqref{Q1-GRP}, particularly for the Euler equations \eqref{2d-Euler},  has the following two versions. The difference from 1-D version is that the multi-dimensional effect is included.  \vspace{0.2cm}

\subsubsection{Quasi-1-D acoustic case.}  At any point  $(0,\td y)$, if $\bu_\ell(0-0,\td y)\approx \bu_r(0+0,\td y)$ and $\|\nb\bu_\ell\|\neq \|\nb\bu_r\|$, we view it as a quai-1-D acoustic case. Denote $\bu_0: = \bu_\ell(0-0,\td y)\approx \bu_r(0+0,\td y)$ and $A(\bu_0) = \frac{\pt \bbf}{\pt \bu}(\bu_0)$.  We make the decomposition $A(\bu_0) =R \La R^{-1}$,
where  $\La=\mbox{diag}\{\la_i\}$, $R$ is the (left) eigenmatrix of $A(\bu_0)$.  Then the acoustic GRP solver takes 
\begin{equation}
\begin{array}{rl}
\d \left(\dfr{\pt \bu}{\pt t}\right)_{(0,\td y, 0)} =& \d  -R\La^+ R^{-1} \left(\dfr{\pt \bu_\ell}{\pt x}\right)_{(0-0,\td y)}-R I^+ R^{-1} \left(\dfr{\pt \bg(\bu_\ell)}{\pt y}\right)_{(0-0,\td y)}\\[3mm]
 &-R\La^- R^{-1} \left(\dfr{\pt \bu_r}{\pt x}\right)_{(0+0,\td y)}-R I^- R^{-1} \left(\dfr{\pt \bg(\bu_r)}{\pt y}\right)_{(0+0,\td y)},
\end{array}
\label{acoust}
\end{equation}
where $\La^+ =\mbox{diag}\{\max(\la_i,0)\}$, $\La^- =\mbox{diag}\{\min(\la_i,0)\}$,  $I^+ =\frac 12 \mbox{diag}\{1+\mbox{sign}(\la_i)\}$, $I^- =\frac 12 \mbox{diag}\{1-\mbox{sign}(\la_i)\}$.

\vspace{0.2cm}

\subsubsection{Quasi-1-D  nonlinear GRP solver.}  At any point $(0,\td y)$, if the difference $\|\bu_\ell(0-0,\td y)-\bu_r(0+0,\td y)\|$ is large, we regard it as the genuinely nonlinear case and have to solve the  quasi 1-D  GRP analytically. A key ingredient is how to understand $\bg(\bu)_y$. Here we construct the quasi 1-D GRP solver by two steps. 
\vspace{0.2cm}

(i) We solve the local 1-D planar Riemann problem for 
\begin{equation}
\begin{array}{l} 
\bv_t +\bbf(\bv)_x =0,\ \ \ \ t>0, \\[3mm] 
\bv(x,\td y,0) =\left\{
\begin{array}{ll}
\bu_\ell(0-0,\td y), \ \ \ &x<0,\\
\bu_r(0+0,\td y), & x>0,
\end{array}
\right.
\end{array}
\label{q1d}
\end{equation} 
to obtain the local Riemann solution $\bu_0 =\bv(0,\td y,0+0)$.  Just as in the acoustic case, we decompose $A(\bu_0) =\frac{\pt \bbf}{\pt \bu}(\bu_0) =R\La R^{-1}$.  Then we set 
\begin{equation}
\bh(x,\td y) = \left\{
\begin{array}{ll}
-R I^+ R^{-1} \left(\dfr{\pt \bg(\bu_\ell)}{\pt y}\right)_{(0-0,\td y)}, \ \  \  &x<0,\\[3mm]
-RI^- R^{-1} \left(\dfr{\pt \bg(\bu_r)}{\pt y}\right)_{(0+0,\td y)}, \ \  \  &x>0,
\end{array}
\right.
\end{equation} 
where $I^\pm$ are defined the same as in \eqref{acoust}. 
\vspace{0.2cm}

(ii)  We solve the quasi 1-D GRP 
\begin{equation}
\begin{array}{l}
\bu_t + \bbf(\bu)_x=h(x,\td y), \ \ \ \ t>0,\\[3mm]
\bu(x,\td y, 0) =\left\{
\begin{array}{ll}
\bu_\ell(x,\td y), \ \ \ &x<0,\\
\bu_r(x,\td y), & x>0,
\end{array}
\right.
\end{array}
\end{equation} 
to obtain $\frac{\pt \bu}{\pt t}(0,\td y, 0+0)$.  The details can be found in \cite{Li-2}.

\section*{Acknowlegement}

The authors appreciate Yue Wang and Kun Xu for their careful reading, which greatly improves the English presentation. Jiequan Li is supported by by NSFC (No. 11371063, 91130021),  the doctoral program from the Education Ministry of China (No. 20130003110004) and the Innovation Program from Beijing Normal University (No. 2012LZD08).

\vspace{0.5cm}

\begin{thebibliography}{10}


\bibliographystyle{plain}

\bibitem{Abgrall-RD} R.  Abgrall,  Residual distribution schemes: current status and future trends,  {\em Comput. \& Fluids}, 35 (2006), 641--669.

\bibitem{Barth}  T. J.  Barth and H.  Deconinck, Herman (Eds.), High-Order Methods for Computational Physics, Lecture Notes in Computational Science and Engineering, {\em Springer-Verlag}, 1999. 

\bibitem{Ben-Artzi-84} M. Ben-Artzi and J. Falcovitz, A second-order Godunov-type scheme for compressible fluid dynamics,  {\em J. Comput. Phys.} , 55 (1984),  1--32. 

\bibitem{Ben-Artzi-01}M. Ben-Artzi and J. Falcovitz, Generalized Riemann problems in computational fluid dynamics, {\em Cambridge Monographs on Applied and Computational Mathematics, 11,  Cambridge University Press, Cambridge, }2003.

\bibitem{Li-1} M. Ben-Artzi, J. Q. Li and G. Warnecke,  A direct Eulerian GRP scheme for compressible fluid flows, {\it J.  Comp. Phys.,}  218 (2006), 19--43.
\bibitem{Li-2} M. Ben-Artzi and J. Q. Li, Hyperbolic conservation laws: Riemann invariants and the generalized Riemann problem, {\em Numerische Mathematik,} 106 (2007), 369--425. 

\bibitem{DG-2}  B. Cockburn and C.-W.  Shu, TVB Runge-Kutta local projection discontinuous Galerkin finite element method for conservation laws. II. General framework, {\em Math. Comp.}, 52 (1989),  411--435.

\bibitem{Du-Li}Z. F. Du and J. Q. Li,  A Novel Two-Stage  Fourth Order Temporal Discretization Based on the Lax-Wendroff Approach,  II. Viscous compressible fluid flows, {\em work in preparation}, 2016. 

\bibitem{Godunov} S. K. Godunov, A finite difference method for
the numerical  computation and disontinuous solutions of the
equations of fluid dynamics, {\em Mat. Sb.} 47 (1959), 271--295.

\bibitem{Han} E.  Han, J. Q.  Li and H. Z. Tang, Accuracy of the adaptive GRP scheme and the simulation of 2-D Riemann problem for compressible Euler equations, {\em  Comm.   Comp.  Phys., } Vol. 10, no. 3 (2011), 577--606. 


\bibitem{Harten-ENO} A. Harten, B. Engquist, S. Osher,  and S.  Chakravarthy, Uniformly high-order accurate essentially nonoscillatory schemes,  III. {\em J. Comput. Phys.}, 71 (1987), no. 2, 231--303.

\bibitem{Jiang} G. S. Jiang, and C-W. Shu, Efficient implementation of weighted ENO schemes, {\em J. Comput. Phys.}, 126 (1996), 202--228.

\bibitem{L-W} P. Lax and B. Wendroff, Systems of Conservation Laws, {\em Comm. Pure Appl. Math.}, Vol. XIII (1960), 217--237. 



\bibitem{Li-Wang}J. Q. Li and Y. Wang,  A two-stage fourth order time-accurate  discretization of DG-GRP method for hyperbolic conservation laws, 
{\em submitted}, 2015. 

\bibitem{Liu}X. D. Liu, S.  Osher and  T.  Chan,  Weighted essentially non-oscillatory schemes, {\em J. Comput. Phys.}, 115 (1994), no. 1, 200--212.

\bibitem{Xu-1}  K. Prendergast and K.  Xu, Numerical hydrodynamics from gas-kinetic theory,  {\em J. Comput. Phys.},  109 (1993), no. 1, 53--66.

\bibitem{Qiu} J. X. Qiu,  and C-W. Shu,  Hermite WENO schemes and their application as limiters for Runge-Kutta discontinuous Galerkin method: one-dimensional case,  {\em J. Comput. Phys.} 193 (2004), 115--135;  II. Two dimensional case,  {\em Comput. \& Fluids},  34 (2005), 642--663. 


\bibitem{Shu-Osher} C.-W. Shu and S. Osher, Efficient implementation of essentially nonoscillatory shock-capturing schemes,  II. {\em  J. Comput. Phys.},  83 (1989), 32--78.



\bibitem{Tang-Shen}J. Shen, T. Tang and L.-L.  Wang,  Spectral methods. Algorithms, analysis and applications, {\em Springer Series in Computational Mathematics, 41,  Springer, Heidelberg}, 2011.

\bibitem{Tang-Liu} H. Z. Tang and T. G. Liu,   A note on the conservative schemes for the Euler equations, {\em J. Comp. Phys.}, 218  (2006),  no.1,  451--459.

\bibitem{Toro} E. F. Toro, Riemann solvers and numerical methods
for fluid dynamics: A practical introducition, {\em Springer},
1997.

\bibitem{Toro-ADER} E. F. Toro and V. A. Titarev,  Derivative Riemann solvers for systems of conservation laws and ADER methods. {\em J. Comput. Phys.},  212 (2006), 150--165.

\bibitem{Leer} B. van Leer, Towards the ultimate conservative difference scheme. V. A second-order sequel to Godunov's method, {\em J.  Comp.   Phys., } 32 (1979), 101--136. 

\bibitem{Xu-2} K. Xu and J. C. Huang,  A unified gas-kinetic scheme for continuum and rarefied flows. {\em J. Comput. Phys.},  229 (2010), no. 20, 7747--7764.


\end{thebibliography}
\end{document}